\newtheorem{theorem}{Theorem} 
\newtheorem{prop}{Proposition}
\newtheorem{lemma}[theorem]{Lemma}
\title[Falconer distance problem on Riemannian manifolds]{Falconer distance problem on Riemannian manifolds}
\author{Changbiao Jian}
\address{School of Mathematical Sciences, Zhejiang University, Hangzhou 310027, PR China}
\email{changbiaojian@zju.edu.cn}
\author{Bochen Liu}
\address{Department of Mathematics \& International Center for Mathematics, Southern
University of Science and Technology, Shenzhen, 518055, PR China}
\email{Bochen.Liu1989@gmail.com}
\author{Yakun Xi}
\address{School of Mathematical Sciences, Zhejiang University, Hangzhou 310027, PR China}
\email{yakunxi@zju.edu.cn}
\begin{document}
\begin{abstract}
We prove that on a $d$-dimensional Riemannian manifold, the distance set of a Borel set $E$ has a positive Lebesgue measure if  $$\dim_{\mathcal{H}}(E)>\frac d2+\frac14+\frac{1-(-1)^d}{8d}.$$
\end{abstract}
\maketitle

\section{Introduction}\label{section1}
For a compact subset $E \subseteq \mathbb{R}^d$, where $d \geq 2$, we define its distance set $\Delta(E)$ as:
\[\Delta(E)=\{|x-y|:x,y\in E\}.\]
In 1986, Falconer introduced a famously challenging problem in geometric measure theory and harmonic analysis \cite{Fal}. Falconer was interested 
in how large the Hausdorff dimension of $E$ needs
to be to ensure that the Lebesgue measure of $\Delta(E)$ is positive.
Falconer conjectured that the Lebesgue measure of $\Delta(E)$ is positive when $\dim_{\mathcal{H}}(E) > \frac{d}{2}$ and  proved this result under a slightly weaker assumption: $\dim_{\mathcal{H}}(E) > \frac{d + 1}{2}$. Over the following decades, this conjecture has drawn significant attention. Many remarkable partial results have been established by Mattila \cite{87}, Bourgain \cite{94}, Wolff \cite{99}, Erdogan \cite{05,06}, Du and Zhang \cite{du-zhang}, Du, Guth, Ou, Wang, Wilson and Zhang \cite{du-guth}. Nonetheless, Falconer's conjecture in its full strength is still open in all dimensions.

The best result in dimension 2 is due to Guth, Iosevich, Ou, and Wang \cite{guth2020}. They proved the Lebesgue measure of $\Delta(E)$ is positive if $\dim_{\mathcal{H}}E>\frac{5}{4}$ in the plane. Before long, their method was generalized to all even dimensions by Du, Iosevich, Ou, Wang, and Zhang \cite{du2021}. They showed that the distance set of $E\subset\mathbb R^d$ will have positive Lebesgue measure if $\dim_{\mathcal{H}}(E)>\frac{d}{2}+\frac14$ for all even $d>2$. In very recent works by Du, Ou, Ren, and Zhang \cite{du2023new,du2023weighted}, they are able to improve the dimensional exponent in every dimension $d\geq 3$. 

Exploring this conjecture on Riemannian manifolds is intriguing as well. Let $(M,g)$ be a boundaryless Riemannian manifold of dimension $d\geq2$. For a set $E\subset M$, define its distance set as
\[\Delta_g(E)=\{d_g(x,y): x,y\in E\},\]
where $d_g$ is the associated Riemannian distance function. It is then a natural question: how large must the Hausdorff dimension of $E$ be to ensure that the Lebesgue measure of $\Delta_{g}(E)$ is positive?
Similar to the work of Falconer \cite{Fal}, 
the Lebesgue measure of $\Delta_g(E)$ can be shown to be positive if 
$\dim_{\mathcal{H}}(E)>\frac{d}{2}+\frac{1}{2}$, which is
a consequence of the generalized projection theorem of Peres--Schlag \cite{R00}. Their generalized projection theorem
implies a stronger result, that is, the Lebesgue measure of the pinned distance set 
\[\Delta_{g,x}(E):=\{d_g(x,y): y\in E\},\]
for some $x\in E$, is positive under the same conditions.

Several authors have studied this Riemannian version of Falconer’s conjecture. Eswarathasan--Iosevich--Taylor \cite{R11} and Iosevich--Taylor--Uriarte--Tuero \cite{R21} studied more general distance set problems via Fourier integral operators. Their results imply the same dimensional exponent, $\frac{d}{2}+\frac{1}{2}$, as Peres--Schlag for the Riemannian case. Using local smoothing estimates of Fourier integral operators, Iosevich and Liu \cite{R19} improved Peres and Schlag's results if the pins are not required to lie in the given set itself. 
Recently, Iosevich, Liu, and Xi \cite{xi2022} extended the result of Guth, Iosevich, Ou, and Wang \cite{guth2020} in $\mathbb R^2$ to general Riemannian surfaces. However, there have not been any improvements over the classical threshold $\frac{d}{2}+\frac{1}{2}$ in higher dimensional manifolds.

{The primary purpose of this paper is to study the Falconer distance problem in higher-dimensional Riemannian manifolds.
Our main result improves the best-known bounds for the Falconer problem on general higher-dimensional Riemannian manifolds. In particular, we generalize the results of Du--Iosevich--Ou--Wang-Zhang to the Riemannian setting.

\begin{theorem}\label{t1}
    Let $d>2$ be an integer. Let $(M,g)$ be a boundaryless $d$-dimensional Riemannian manifold.   Let $E\subset M$. We assume that 
\[\dim_{\mathcal{H}}(E)>
\begin{cases}
    \frac{d}{2}+\frac{1}{4}, & d\text{ is even},\\
    \frac{d}{2}+\frac{1}{4}+\frac{1}{4d}, & d\text{ is odd}.\\
\end{cases}\]
   Then there is a point $x\in E$
     so that the Lebesgue measure of its pinned distance set $\Delta_{g,x}(E)$ is positive. Consequently, the Lebesgue measure of $\Delta_{g}(E)$ is positive.
\end{theorem}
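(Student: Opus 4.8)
The strategy is to transfer the Euclidean argument of Du--Iosevich--Ou--Wang--Zhang \cite{du2021} (and its odd-dimensional refinement) to the manifold by exploiting the rigidity of constant-curvature metrics. First I would localize: since the statement is about Hausdorff dimension, it suffices to work in a small geodesic ball, so I may assume $M$ is one of the model spaces $\mathbb{S}^d$, $\mathbb{H}^d$, or $\mathbb{R}^d$ (after rescaling the curvature to $\pm 1$), and that $E$ is contained in a coordinate patch. Fix a Frostman measure $\mu$ on $E$ with $\mu(B(x,r)) \lesssim r^{s}$ for $s = \dim_{\mathcal H}(E) - \varepsilon$. As in the Euclidean theory, positivity of $|\Delta_{g,x}(E)|$ for some $x$ follows from an $L^2$ bound on the pinned distance measure: writing $d(\nu_x)(t)$ for the pushforward of $\mu$ under $y \mapsto d_g(x,y)$, it is enough to show $\int \|\nu_x\|_{L^2}^2\, d\mu(x) < \infty$, which reduces to controlling a bilinear/multilinear average of the form $\int\int\int\int |\widehat{d\sigma_r^{x}}(\cdots)|\,\cdots$, i.e. to an estimate for the spherical (geodesic-sphere) averaging operator and its $L^2$-based refinements, exactly the quantity that the decoupling/refined-Strichartz machinery bounds in $\mathbb{R}^d$.

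The key point making constant curvature special is that the \emph{geodesic spheres} $S_r(x) = \{y : d_g(x,y) = r\}$ are, in normal coordinates centered at $x$, genuine Euclidean spheres (round, of radius depending on $r$), and more importantly the relevant phase function $\phi(x,y) = d_g(x,y)$ in the Fourier-integral representation of the geodesic-sphere average is, after a smooth change of variables, the Euclidean one $|x-y|$ up to a curvature-dependent conformal factor; this is the content of the fact that these spaces are conformally flat and two-point homogeneous. Concretely I would use geodesic polar coordinates and the explicit warped-product form of the metric ($dr^2 + \mathrm{sn}_\kappa(r)^2 d\omega^2$) to write the distance-measure Fourier transform as a Fourier integral operator whose canonical relation is a \emph{diffeomorphic image} of the Euclidean one. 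Then the Du--Guth--Ou--Wang--Wilson--Zhang type $k$-linear refined Strichartz / weighted decoupling estimates — which are stable under such diffeomorphic perturbations of the extension operator (this is already how one proves local smoothing for variable-coefficient FIOs à la Beltran--Hickman--Sogge, and how \cite{xi2022} handled $d=2$) — apply with the same numerology. Plugging the resulting bounds into the $L^2$ pinned-distance scheme yields the exponent $\frac d2 + \frac14$ for even $d$ and $\frac d2 + \frac14 + \frac1{4d}$ for odd $d$, matching \cite{du2021} and \cite{du2023new}.

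The main obstacle, and where the real work lies, is making the transfer \emph{quantitatively uniform}: one must verify that the curvature perturbation is a genuinely lower-order term at every dyadic scale in the induction-on-scales / broad--narrow decomposition, so that the Euclidean $\ell^2$-decoupling constants are not degraded. This requires a careful parametrix for the half-wave propagator (or the geodesic-sphere average) on the model space with uniform control of all symbol seminorms on the frequency-localized pieces, plus checking that the "good"/"transverse" tubes in the refined Strichartz estimate remain transverse after the diffeomorphism — the transversality is exactly where constant curvature (versus merely curvature bounds) is used, since the exact two-point homogeneity guarantees the defining functions of the sphere families satisfy the same non-degeneracy (rotational curvature) hypotheses globally, not just perturbatively. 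I expect steps (i) the parametrix with uniform seminorm bounds and (ii) the transversality/non-degeneracy verification to be the crux; once those are in place, the arithmetic reducing the multilinear average to the claimed Hausdorff exponent is routine and identical to the Euclidean case.
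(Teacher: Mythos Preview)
Your plan has a genuine gap: it misidentifies where the constant-curvature hypothesis is actually used, and as a result omits the entire mechanism that produces the sharper exponent.

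You propose to show $\int \|\nu_x\|_{L^2}^2\, d\mu(x) < \infty$ directly by transferring Euclidean decoupling/refined Strichartz through a diffeomorphism. But the refined microlocal decoupling inequality (Theorem~\ref{t5}, from \cite{xi2022}) already holds on \emph{arbitrary} Riemannian manifolds --- no constant curvature is needed for that step, and the paper uses it verbatim in the proof of Theorem~\ref{t3}. If you run only the $L^2$/decoupling argument on the full measure $\mu_1$, the numerology gives exactly the threshold $\tfrac{d}{2}+\tfrac14+\tfrac{3}{8d+4}$ of Theorem~\ref{t3}, not the $\tfrac{d}{2}+\tfrac14$ (even $d$) or $\tfrac{d}{2}+\tfrac14+\tfrac{1}{4d}$ (odd $d$) of Theorem~\ref{t1}. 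So your scheme, even if executed perfectly, reproves the weaker theorem.

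What actually drives Theorem~\ref{t1} is the good/bad tube decomposition of \cite{guth2020,du2021}: one removes wave packets $M_T\mu_1$ for which $\mu_2(4T)$ is too large, and then applies refined decoupling only to $\mu_{1,\text{good}}$. The constant-curvature assumption enters solely in Lemma~\ref{l3}, which bounds $\mu_1\times\mu_2(\text{Bad}_j)$. In $\mathbb{R}^d$ this step uses orthogonal projections onto $\lfloor d/2\rfloor$-planes together with Orponen's radial projection theorem; on a manifold there is no natural orthogonal projection. Constant sectional curvature guarantees that every $\lceil d/2\rceil$-plane in $T_pM$ exponentiates to a \emph{totally geodesic} submanifold, and these foliate a neighborhood, so one can define projections $\pi_V$ onto $\exp_{p_0}(V)$ for $V\in\mathbf{Gr}(\lfloor d/2\rfloor,d)$ and verify the Peres--Schlag transversality condition for them. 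None of this has anything to do with the roundness of geodesic spheres, conformal flatness, or tube transversality in the multilinear-restriction sense; those features are irrelevant here. Your plan needs to incorporate the bad-tube removal and supply a Riemannian substitute for the projection argument --- that is precisely where the work is.
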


Theorem \ref{t1} are new for all $d\ge 3$. The proof of Theorem \ref{t1}  is inspired by key strategies introduced in \cite{guth2020}, \cite{du2021}, and \cite{xi2022}. In \cite{guth2020}, the authors employ Liu's approach \cite{L19} and further improve it
by removing some ``bad" wave packets at different scales. To do so, they first make use of a radial projection of Orponen \cite{orponen19} to justify the deletion of such ``bad" wave packets. Then, they prove a refined decoupling inequality, which provides a gain once the ``bad" wave packets are removed. At first, it seems difficult to generalize this result to higher dimensions since Orponen's radial projection theorem \cite{orponen19} is only effective for sets with dimensions greater than $d-1$.
Nonetheless, in the work of Du--Iosevich--Ou--Wang--Zhang \cite{du2021}, they overcome this difficulty by using orthogonal projections to project the original measure onto a suitable lower dimensional space so that Orponen's theorem can be exploited. 
In \cite{xi2022}, refined microlocal decoupling inequalities are established on Riemannian manifolds, which are inspired by the work of Beltran, Hickman, and Sogge on variable coefficient decoupling inequalities \cite{sogge20}. Moreover, Orponen's radial projection theorem has been extended to the case of Riemannian surfaces. 

Unlike Euclidean space, where orthogonal projections are readily available, the lack of a natural analog of the orthogonal projection on Riemannian manifolds poses a major challenge. This is the main obstacle that we overcome in this paper. 
We introduce a family of submanifolds associated with each element $V$ in the Grassmannian ${\bf Gr}(\lfloor d/2\rfloor,d)$ on the tangent space of a fixed point $p_0\in M$.  
We work in Fermi coordinates with respect to the submanifold $\exp_{p_0}(V)$. For each $p\in\exp_{p_0}(V)$, we consider the submanifolds $\Sigma_{V,p}$ of dimension $\lceil d/2\rceil$ which are orthogonal 
to $\exp_{p_0}(V)$. 
Notice that $\{\Sigma_{V,p}\}_{p\in\exp_{p_0}(V)}$ is a local foliation of $M$
near the point $p_0$. 
With this foliation, we can define a natural analog of the orthogonal projection on $M$ and adopt the idea in \cite{du2021}.

This paper is organized as follows. In Section \ref{section2}, we establish the framework for studying Falconer's problem on Riemannian manifolds. We perform some standard reductions and outline the two main estimates needed. Sections \ref{section3} and \ref{section4} house the proof of these two main estimates, respectively. In Section \ref{section5}, we prove Lemma \ref{l3} and complete the proof of Theorem \ref{t1}.

\textbf{Notation.} We use the notation $A\lesssim B$, if there exists an absolute constant $C$ such that $A\leq CB$. We shall employ the notation $A\sim B$ when both $A\lesssim B$ and $B\lesssim A$. Throughout this article, $\mathrm{RapDec}(R)$ refers to quantities that rapidly decay as $R\to\infty$, that is, there is a constant $C_N$ so that $\mathrm{RapDec}(R)\leq C_NR^{-N}$ for arbitrarily large $N$. $\dim_{\mathcal{H}}$ denotes the Hausdorff dimension. $B^d(x,r)$ denotes the $d$-dimensional geodesic ball  centered at $x$ with a radius of $r$. Similarly, $S^{d-1}(x,r)$ denotes the geodesic sphere. We shall omit the dimensions if it is clear from context.

\textbf{Acknowledgement.} This project is supported by the National Key Research and Development
Program of China No. 2022YFA1007200
 and NSF China Grant No. 12131011 and 12171424. The authors would like to thank Kevin Ren for helpful conversations. The authors would like to thank an anonymous referee for helpful comments and suggestions.
\section{Standard reductions and the Main estimates}\label{section2}
Let $(M,g)$ be a $d$-dimensional Riemannian manifold, and let $\alpha>0$. Consider a compact set $E\subset M$ with Hausdorff dimension $>\alpha$. By possibly rescaling the metric, we assume that $E$ is contained in the unit geodesic ball centered at a point $p_0$, so that in the geodesic normal coordinate about $p_0$, the metric $g$ is approximately flat, in the sense that for all $i,j$, $|g_{ij}-\delta_{ij}|<\epsilon_0$ for some universal constant $\epsilon_0$ that we can choose to be small. We select two disjoint compact subsets $E_1\subset B_1$ and $E_2\subset B_2$ of $E$ such that
\[\dim_{\mathcal{H}}(E_1), \dim_{\mathcal{H}}(E_2)>\alpha\]
and  $d_g(E_1,E_2)\sim1$. Here, $B_1$ and $B_2$ denote two geodesic balls with radius $\frac{1}{100}$, and the distance between $E_1$ and $E_2$ is defined as
\[d_g(E_1,E_2):=\inf_{x\in E_1,y\in E_2}d_g(x,y)>0.\]
Additionally, each $E_j$ admits a probability measure $\mu_j$ such that $\mathrm{supp}~\mu_j\subset E_j$ and $\mu_j(B(x,r))\lesssim r^{\alpha}$. 

For any fixed $x\in E_2$, we define the pinned distance function associated with $x$ as 
\[d^x_g(y) := d_g(x,y).\]
We then define the pushforward measure $d^x_*(\mu) = (d_g)^x_*(\mu)$ of $\mu$ through the relation:
\[\int_{\mathbb{R}}h(t)\,d^x_*(\mu)=\int_Mh(d_g(x,y))\,d\mu(y).\]

Next, we establish the framework of our problem for the Riemannian setting and construct a complex-valued measure $\mu_{1, \text{good}}$, which corresponds to the ``good" part of $\mu_1$ concerning $\mu_2$. Then, we shall investigate the pushforward of this measure under  $d^x_g(y)$ and reduce the problem to two main estimates.

Let us now proceed to set up the problem. We start with a microlocal decomposition,  a higher dimensional analog of that in \cite{xi2022}.  We work in the geodesic normal coordinates ${(x', x_d)}\in\mathbb{R}^{d-1}\times\mathbb{R}$ about the point $p_0$. We may assume that $p_0$ lays near the middle point between $B_1$ and $B_2$ so that $E_1\subset B_1$ and $E_2\subset B_2$ are approximately $1$-separated by their $d$-th coordinate.

Let $R_0$ be a sufficiently large number, denoted by  $R_j=2^jR_0$. Cover the annulus region $R_{j-1}\leq|\xi|\leq R_j$ where $\left\{(\xi',\xi_d): \frac{|\xi_d|}{|\xi|}\geq\frac{1}{10}\right\}$ by rectangular blocks  $\tau$. These blocks have dimensions $R_j^{1/2}\times\cdots\times R_j^{1/2}\times R_j$, where the long direction of each block aligns with the radial direction of the annulus. To account for the remaining portion of the annulus, we use two larger blocks to cover them. 
The corresponding smooth partition of unity subordinate to this cover is the following.
\[1=\psi_0+\sum_{j\geq1}\big[\sum_{\tau}\psi_{j,\tau}+\psi_{j,-}+\psi_{j,+}\big],\]
 where $\psi_{j,\tau}$ is supported in $\tau$, $\psi_{j,+}$ is a smooth bump function adapted to the set:
 \[\Big\{(\xi',\xi_d): R_{j-1}\leq|\xi|\leq R_j, 0\leq\frac{\xi_d}{|\xi|}\leq\frac{1}{10}\Big\}.\]
 Similarly $\psi_{j,-}$ is a smooth bump function  adapted to the set:
 \[\Big\{(\xi',\xi_d): R_{j-1}\leq|\xi|\leq R_j, -\frac{1}{10}\leq\frac{\xi_d}{|\xi|}\leq0\Big\}.\]

Let $\delta>0$ be a small constant. Denote $T_0$ by the geodesic tube around the $x_d$-axis with cross-section radius  $\frac{1}{10}$. For each pair $(j,\tau)$, we consider the collection of geodesics $\gamma$ such that each $\gamma$ intersects the hyperplane $\{x_d=0\}$ and the tangent vector at the intersection point is pointing in the direction of $\tau$ in our coordinate system. For each such $\gamma$, we define the associated tube $T$ to be a geodesic tube with central geodesic $\gamma$ of dimension $R_j^{-1/2+\delta}\times\cdots\times R_j^{-1/2+\delta}\times1$. We then choose a subcollection of such tubes, $\mathbb{T}_{j,\tau}$, so that tubes in $\mathbb{T}_{j,\tau}$  cover $T_0$ with bounded overlap. We then define $\mathbb{T}_j=\cup_{\tau}\mathbb{T}_{j,\tau}$ as well as $\mathbb{T}=\cup_{j}\mathbb{T}_j$. Let $\eta_T$ be a smooth partition of unity subordinate to this covering. That is, for each pair $(j, \tau)$, we have $\sum_{T\in\mathbb{T}_{j,\tau}}\eta_T$ equals to 1 in the unit ball.

Similar to \cite{xi2022}, we need a microlocal decomposition for functions with support in $B_1$. Fix $(j,\tau)$, for each $(x,\xi)$, there exists a unique point $z=(z'(x,\xi),0)$ on the hyperplane $\{x_d=0\}$ such that the geodesic connecting $z$ and $x$ has tangent vector $\xi/|\xi|$ at $x$. We then define $\Phi(x,\xi/|\xi|)$ to be the unit tangent vector of this geodesic at the point $z$ and extend it to be a homogeneous function of degree 1 in $\xi$ so that $|\Phi(x,\xi)|=|\xi|$.  Let $f$ be a function with $\mathrm{supp}(f)\subset B_1$, and for each $(j,\tau)$, we define
\[\mathcal P_{j,\tau}f(x)=\iint_{\mathbb{R}^d\times\mathbb{R}^d}e^{2\pi i(x-y)\cdot\xi}\psi_{j,\tau}(\Phi(x,\xi))f(y)\,dyd\xi.\]

Similarly, we define
\[\mathcal P_{j,\tau}f(x)=\iint_{\mathbb{R}^d\times\mathbb{R}^d}e^{2\pi i(x-y)\cdot\xi}\psi_{j,\pm}(\Phi(x,\xi))f(y)\,dyd\xi,\]
and
\[\mathcal P_{j,0}f(x)=\iint_{\mathbb{R}^d\times\mathbb{R}^d}e^{2\pi i(x-y)\cdot\xi}\psi_{j,0}(\Phi(x,\xi))f(y)\,dyd\xi.\]
Consequently, $\mathcal P_0+\sum_{j\geq1}[\sum_{\tau}\mathcal P_{j,\tau}+\mathcal P_{j,+}+\mathcal P_{j,-}]$ is the identity operator.

Now, for each $T\in\mathbb{T}_{j,\tau}$, we define 
\[M_Tf:=\eta_T\mathcal P_{j, \tau}f.\]
Similarly, we define $M_{j,\pm}f=\eta_{T_0}\mathcal P_{j,\pm}f$ and $M_0f=\eta_{T_0}\mathcal P_0f$, where $\eta_{T_0}$ is a smooth function satisfying $\eta_{T_0}=1$ on $T_0$ and  decays rapidly away from it.

With these preparations in place, we now construct the ``good" measure $\mu_{1, good}$ by removing certain ``bad" wave packets from $\mu_1$.  We shall show that the error between $d^x_*(\mu_{1,\text{\rm good}})$ and $d^x_*(\mu_1)$ remains sufficiently  small in the sense of $L^1$ norm.
Denote $4T$ as a concentric tube of $T$ with four times the cross-section radius. Consider a large constant $c(\alpha)=c(\alpha,d)>0$ which shall be determined later. { Now we define the bad tubes for the odd and even dimensional cases, respectively. When $d$ is even, for each tube $T\in\mathbb{T}_{j,\tau}$, we say that it is a bad tube if
\[\mu_2(4T)\geq R_j^{-d/4+c(\alpha)\delta}.\]
When $d$ is odd, we say that $T$ is a bad tube if
\[\mu_2(4T)\geq R_j^{-(d-1)/4+c(\alpha)\delta}.\]}
Otherwise, we consider $T$ as a good tube. Accordingly, we define
\[\mu_{1,\text{\rm good}}:=M_0\mu_1+\sum_{T\in\mathbb{T},T~\text{is good}}M_T\mu_1.\]
 
In \cite{guth2020}\cite{du2021}, the authors established that good tubes make primary contributions in even-dimensional Euclidean space. We generalize this result to the Riemannian setting. We will prove the following two propositions in the Riemannian framework. Indeed, Theorem \ref{t1} is a consequence of these two propositions.
\begin{prop}\label{p1}
Let $d\geq 2$ be an integer and $(M,g)$ be a boundaryless Riemannian 
    manifold of dimension $d$. If  $\alpha>\frac{d}{2}$ and $R_0$ is large enough, then there exists a subset $E'_2\subset E_2$ such that $\mu_2(E'_2)\geq1-\frac{1}{1000}$ and for any $x\in E'_2$, 
\[\|d^x_*(\mu_1)-d^x_*(\mu_{1,\text{\rm good}})\|_{L^1}<\frac{1}{1000}.\]
\end{prop}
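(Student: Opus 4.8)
The plan is to bound the $L^1$ error by $\mu_2$-mass of the ``bad'' tubes, then show that the total mass of bad tubes is small using the Frostman condition on $\mu_1$ together with a counting argument, exactly as in \cite{guth2020,du2021} but adapted to the geodesic tube geometry. More precisely, I would first observe that
\[
d^x_*(\mu_1)-d^x_*(\mu_{1,\text{\rm good}})=\sum_{j\ge1}\sum_{\tau}\sum_{T\in\mathbb{T}_{j,\tau},\,T\text{ bad}}d^x_*(M_T\mu_1),
\]
and estimate each $\|d^x_*(M_T\mu_1)\|_{L^1}$. The key geometric fact is that the pushforward under $d^x_g$ of a function supported in a tube $T\in\mathbb{T}_{j,\tau}$ — which points in a fixed direction in the $\tau$-cap at scale $R_j^{-1/2}$ — is essentially supported on an interval of length $\sim R_j^{-1/2+\delta}$, because the level sets of $d^x_g$ (geodesic spheres about $x$) meet such a tube transversally with a controlled angle once $x\in E_2$ is separated from $B_1$; this transversality is where the approximately-flat normal coordinates and $d_g(E_1,E_2)\sim1$ enter, and it is the analog of the Euclidean computation in \cite{xi2022}. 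Combined with the trivial bound $\|M_T\mu_1\|_{L^1}\lesssim\mu_1(CT)$ (up to $\mathrm{RapDec}(R_j)$ tails), one gets $\|d^x_*(M_T\mu_1)\|_{L^1}\lesssim \mu_1(CT)$ after integrating, so it suffices to control $\sum_{T\text{ bad}}\mu_1(CT)$.

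Next I would pass from a bound for fixed $x$ to a bound on average. Integrate in $x$ against $d\mu_2$: by Fubini,
\[
\int_{E_2}\sum_{j,\tau}\sum_{T\in\mathbb{T}_{j,\tau}}\mathbf 1[T\text{ bad for }x]\,\mu_1(CT)\,d\mu_2(x).
\]
Here ``bad'' should be formulated so that whether $T$ is bad depends on $\mu_2$-mass near $T$; the cleanest route, following \cite{du2021}, is: a tube $T$ is bad if $\mu_2(4T)\ge R_j^{-d/4+c(\alpha)\delta}$ (resp. $R_j^{-(d-1)/4+c(\alpha)\delta}$), which does not depend on $x$, and then one notes that for each $x$ only the tubes in $\mathbb T_{j,\tau}$ actually ``seen'' by $x$ contribute. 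I would then bound the double sum
\[
\sum_{j\ge1}\sum_{\tau}\ \sum_{\substack{T\in\mathbb{T}_{j,\tau}\\ \mu_2(4T)\ge R_j^{-\kappa_d+c(\alpha)\delta}}}\mu_1(CT)\,\mu_2(4T)
\le \sum_{j\ge1}\sum_{\tau}\Big(\sum_{T\in\mathbb{T}_{j,\tau}}\mu_1(CT)\Big)\Big(\max_{T}\mu_2(4T)\Big)^{?}
\]
by splitting $\mu_2(4T)\le \mu_2(4T)^{\epsilon}\cdot(\text{const})^{1-\epsilon}$ and using $\sum_{T\in\mathbb T_{j,\tau}}\mu_1(CT)\lesssim 1$ (bounded overlap) together with $\sum_\tau 1\lesssim R_j^{(d-1)/2}$; crucially, on bad tubes $\mu_2(4T)\ge R_j^{-\kappa_d+c(\alpha)\delta}$ lets us trade a negative power of $R_j$. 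The Frostman bound $\mu_1(CT)\lesssim (R_j^{-1/2+\delta})^{\alpha-(d-1)}\cdot$ (the tube has $d-1$ thin directions of width $R_j^{-1/2+\delta}$ and one long direction) — more carefully $\mu_1(CT)\lesssim R_j^{-(\alpha-?)/2+\dots}$ — supplies the decisive saving: since $\alpha>d/2$, one checks that the exponent of $R_j$ is strictly negative provided $c(\alpha)$ is chosen large enough relative to how $\alpha$ exceeds $d/2$ (and $\delta$ small), so the sum over $j$ converges and is $\ll 1/1000$. Finally, a Chebyshev/Markov argument upgrades the smallness of the $\mu_2$-average to the existence of a set $E_2'\subset E_2$ with $\mu_2(E_2')\ge 1-\tfrac1{1000}$ on which the pointwise $L^1$ error is $<\tfrac1{1000}$.

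The main obstacle I anticipate is the geometric transversality/support claim: proving that $d^x_*(M_T\mu_1)$ is genuinely concentrated on an interval of length $\sim R_j^{-1/2+\delta}$ uniformly in $T\in\mathbb T_{j,\tau}$ and $x\in E_2$. In the Euclidean case this is immediate because the distance sphere is an actual sphere and the tube direction matches the cap direction of $\widehat{M_T\mu_1}$; on a manifold one must instead use that $M_T$ is a microlocal cutoff built from $\Phi(x,\xi)$ (the parallel transport of the frequency along geodesics) so that $\widehat{M_T\mu_1}$ is microlocally supported where the \emph{cogeodesic flow} points in the $\tau$-direction, and then verify via the eikonal equation / Hamilton--Jacobi theory for $d_g(x,\cdot)$ that the gradient $\nabla_y d_g(x,y)$ makes an angle bounded away from $\pi/2$ with that direction on the support of $M_T\mu_1$ — this is precisely the place where one invokes $|g_{ij}-\delta_{ij}|<\epsilon_0$, $d_g(E_1,E_2)\sim1$, and the cone condition $|\xi_d|/|\xi|\ge 1/10$ (the $\psi_{j,\pm},\psi_{j,0}$ pieces being handled by fatter tubes and the same argument). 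Everything else — Fubini, the bounded-overlap packing, and the arithmetic of exponents — is routine once this is in hand, and I would present it in the order: (1) reduce to summing $\mu_1(CT)\mu_2(4T)$ over bad tubes, (2) prove the transversality lemma giving the interval-length bound, (3) do the counting with the Frostman bounds, (4) conclude by Chebyshev.
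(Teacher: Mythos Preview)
Your reduction steps (1), (2), and the Chebyshev step (4) are essentially correct and match the paper's Lemma~\ref{l2}. The genuine gap is in step (3): a counting argument using only the Frostman bounds on $\mu_1,\mu_2$ cannot close. After Fubini you are trying to show $\sum_{T\in\mathbb T_j,\ T\text{ bad}}\mu_1(2T)\mu_2(2T)\lesssim R_j^{-2\delta}$. The incidence observation that any pair $(x_1,x_2)\in E_1\times E_2$ lies in $O(1)$ tubes gives only $\sum_T\mu_1(2T)\mu_2(2T)\lesssim 1$, with no gain from restricting to bad tubes. If instead you count bad tubes via $\#\{T\text{ bad}\}\lesssim R_j^{(d-1)/2+\kappa_d-c\delta}$ and use $\mu_i(2T)\lesssim R_j^{(1-\alpha)/2}$, the exponent you get is $(d-1)/2+\kappa_d+1-\alpha$, which is positive for $\alpha$ just above $d/2$ (e.g.\ for $d=4$, $\kappa_d=1$, it requires $\alpha>7/2$). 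The factor $c(\alpha)\delta$ in the bad-tube threshold is $O(\delta)$ and cannot repair an $O(1)$ deficit in the exponent; your phrase ``provided $c(\alpha)$ is chosen large enough'' conflates this with a genuine power saving.

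A structural red flag is that your argument never uses the constant sectional curvature hypothesis. In the paper this hypothesis is precisely what makes step (3) (their Lemma~\ref{l3}) go through: constant curvature guarantees the existence of totally geodesic submanifolds $\Sigma_{V,p}$ orthogonal to $\exp_{p_0}(V)$ for each $V\in\mathbf{Gr}(\lfloor d/2\rfloor,d)$, which in turn gives a well-defined family of ``orthogonal projections'' $\pi_V$ on $M$. The proof of Lemma~\ref{l3} then proceeds by (i) observing that every geodesic tube lies in $\Sigma^R_{V,p}$ for an $n(d{-}n{-}1)$-dimensional family of $V$'s, (ii) reducing the bad-tube sum to an integral of $\pi_V\mu_2^R(\pi_V(x))$ over $V$ and $x$, and (iii) bounding this via H\"older together with a Peres--Schlag generalized projection estimate (checking the transversality condition for $\pi_V$) and an $L^q$ estimate controlled by the $\alpha$-energy of $\mu_1$. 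This projection machinery --- the Riemannian analogue of the orthogonal-projection step in \cite{du2021} that feeds into Orponen's radial projection theorem --- is the missing idea, and it is not replaceable by Frostman bookkeeping.
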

\begin{prop}\label{p2}
    Let $d\geq 2$ be an  integer. { Suppose that\[\dim_{\mathcal{H}}(E)>
\begin{cases}
    \frac{d}{2}+\frac{1}{4}, & d\text{ is even},\\
    \frac{d}{2}+\frac{1}{4}+\frac{1}{4d}, & d\text{ is odd}.\\
\end{cases}\]} Then for sufficiently small $\delta=\delta(\alpha)$, we have
    \[\int_{E_2}\|d^x_*(\mu_{1,\text{\rm good}})\|_{L^2}^2\,d\mu_2(x)<+\infty.\]
\end{prop}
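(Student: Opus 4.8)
\emph{Setup and reduction to a single frequency scale.} The plan is to run the $L^2$-decoupling argument of Du--Iosevich--Ou--Wang--Zhang \cite{du2021} inside the Riemannian microlocal framework of \cite{xi2022} in dimension $d$; we note that, unlike Proposition \ref{p1}, this estimate should not require the constant-curvature hypothesis, since the decoupling it uses holds for any Carleson--Sj\"olin phase. Write $\mu_{1,\mathrm{good}}=M_0\mu_1+\sum_{j\ge1}\mu_1^j$ with $\mu_1^j:=\sum_{T\in\mathbb{T}_j,\ T\text{ good}}M_T\mu_1$, a smooth function. For fixed $x$, Plancherel in the distance variable $t$ gives
\[
\|d^x_*(\mu_{1,\mathrm{good}})\|_{L^2(dt)}^2=\int_{\mathbb{R}}\Big|\int_M e^{-2\pi i\lambda\,d_g(x,y)}\,\mu_{1,\mathrm{good}}(y)\,dy\Big|^2\,d\lambda.
\]
Since $|\nabla_y d_g(x,y)|\equiv1$, the phase $\lambda\,d_g(x,\cdot)$ oscillates in $y$ at frequency $|\lambda|$, while $M_0\mu_1$ has bounded frequency and each $M_T\mu_1$ with $T\in\mathbb{T}_j$ has $y$-frequency $\sim R_j$; integrating by parts in $y$ shows that $\int e^{-2\pi i\lambda d_g(x,y)}\mu_1^j(y)\,dy=\mathrm{RapDec}(R_j)$ unless $|\lambda|\sim R_j$, and that $M_0\mu_1$ contributes only for $|\lambda|\lesssim1$. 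Hence, up to an $O(1)$ term and rapidly decaying errors,
\[
\int_{E_2}\|d^x_*(\mu_{1,\mathrm{good}})\|_{L^2}^2\,d\mu_2(x)\ \lesssim\ 1+\sum_{j\ge1}I_j,\qquad I_j:=\int_{E_2}\int_{|\lambda|\sim R_j}\Big|\int e^{-2\pi i\lambda\,d_g(x,y)}\,\mu_1^j(y)\,dy\Big|^2 d\lambda\,d\mu_2(x),
\]
so it suffices to prove $I_j\lesssim R_j^{-\kappa}$ for some $\kappa=\kappa(\alpha,d)>0$, once $\delta=\delta(\alpha,d)$ is small.

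\emph{Wave packets and the good-tube gain.} Fix $j$ and write $R=R_j$. For a frozen $\lambda\sim R$, $T_\lambda f(x):=\int e^{-2\pi i\lambda\,d_g(x,y)}f(y)\,dy$ is a Fourier integral operator with Carleson--Sj\"olin phase $d_g$, and $\mu_1^j=\sum_{T\text{ good}}M_T\mu_1$. Since the wave packet $M_T\mu_1=\eta_T\mathcal P_{j,\tau}\mu_1$ has $y$-frequency essentially along $\tau$ and is supported in the part of $T$ near $E_1$, a stationary-phase analysis shows that $G_T^\lambda:=T_\lambda(M_T\mu_1)$ is, up to rapidly decaying tails, spatially concentrated in $4T$ (near the $E_2$-end of the tube) with $x$-frequency in a cap $\widetilde\tau$ of angular width $\sim R^{-1/2}$ dual to $\tau$. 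In particular the $G_T^\lambda$ are $L^2$-almost-orthogonal --- distinct caps $\widetilde\tau$ have essentially disjoint frequency supports and distinct tubes inside a given cap are spatially separated --- and the $\alpha$-dimensionality of $\mu_1$ bounds the $L^2$-energy of $\mu_1^j$ at frequency $R$ by $\lesssim R^{d-\alpha}$. The decisive point of having deleted the bad tubes is that every surviving $G_T^\lambda$ is supported on a set of small $\mu_2$-mass: $\mu_2(4T)\le R^{-d/4+c(\alpha)\delta}$ for $d$ even and $\mu_2(4T)\le R^{-(d-1)/4+c(\alpha)\delta}$ for $d$ odd.

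\emph{Refined microlocal decoupling and conclusion.} Apply the refined microlocal $\ell^2 L^p$-decoupling inequality of \cite{xi2022}, in its $d$-dimensional version (for a hypersurface with $d-1$ nonvanishing principal curvatures, at the critical exponent $p=\tfrac{2(d+1)}{d-1}$), to $\sum_T G_T^\lambda$ weighted by $\mu_2$. After pigeonholing $\mu_2$ onto a union of $R^{-1/2}$-cubes of roughly constant density, the number of tubes $4T$ meeting a typical cube of $\mathrm{supp}\,\mu_2$ is governed by the good-tube mass bound, and the refined inequality turns this into a power gain over the crude cap count. Combining this gain with the $L^2$-almost-orthogonality, the $R^{d-\alpha}$ energy bound, the Frostman bound $\mu_2(B(x,r))\lesssim r^\alpha$, and interpolation (to pass from the $L^p(\mu_2)$-output of the decoupling back to $L^2(\mu_2)$), the bookkeeping should give
\[
I_j\ \lesssim\ R_j^{\,\sigma(\alpha,d)+C(\alpha,d)\delta},\qquad \sigma(\alpha,d)=\frac d2+\frac14-\alpha+\frac{1-(-1)^d}{8d},
\]
where the last summand, equal to $0$ for even $d$ and $\tfrac1{4d}$ for odd $d$, is exactly the loss caused by the weaker good-tube threshold $(d-1)/4$ in odd dimensions. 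Under the hypothesis of the proposition $\sigma(\alpha,d)<0$, so choosing $\delta$ small enough that $\sigma(\alpha,d)+C(\alpha,d)\delta<0$ yields $I_j\lesssim R_j^{-\kappa}$ with $\kappa>0$; summing the geometric series over $j$ completes the proof.

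\emph{Main difficulty.} Everything up to the decoupling step is essentially routine; the work is concentrated there. One must establish the $d$-dimensional refined microlocal decoupling for the phase $d_g$ so that it meshes with the wave-packet geometry of Section \ref{section2}, and then carry out the numerology --- through the chain (refined decoupling) $\to$ ($L^2$-orthogonality) $\to$ (interpolation) $\to$ (the $\alpha$-dimensional bounds for $\mu_1$ and $\mu_2$) --- so that the good-tube density exponent ($d/4$ versus $(d-1)/4$) is precisely what produces the thresholds $\tfrac d2+\tfrac14$ and $\tfrac d2+\tfrac14+\tfrac1{4d}$, all while keeping the curved-geometry errors and the $\delta$- and $c(\alpha)\delta$-losses uniform in $j$ so that the sum over scales converges.
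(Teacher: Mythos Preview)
Your outline is essentially the paper's own argument: Plancherel in $t$ to reduce to a single dyadic frequency $R_j$, the refined microlocal decoupling of \cite{xi2022} at the critical exponent $p_c=\tfrac{2(d+1)}{d-1}$, pigeonholing $\mu_2$ on $R^{-1/2}$-cubes, feeding in the good-tube mass bound $\mu_2(4T)\le R_j^{-\lfloor d/2\rfloor/2+c(\alpha)\delta}$ to control the parameter $L$ in the refined inequality, and then the $L^2$-orthogonality/energy bound for $\sum_T M_T\mu_1$. Your observation that this step needs no constant-curvature hypothesis is also correct.

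Two small points of alignment. First, where you write ``interpolation to pass from $L^p(\mu_2)$ back to $L^2(\mu_2)$,'' the paper does the dual move: it applies H\"older to split $\int |F|^2\,\mu_{2,j}$ into an $L^{p_c}(dx)$ factor (handled by refined decoupling) and a factor $\big(\int \mu_{2,j}^{p_c/(p_c-2)}\big)^{(p_c-2)/p_c}$ (handled by the Frostman bound and the good-tube input via Lemma~8.1 of \cite{xi2022}); this is cleaner than interpolation and is what makes the arithmetic transparent. Second, the exponent the paper actually obtains is $d-\alpha-\tfrac{\lfloor d/2\rfloor}{d+1}-\tfrac{(d-1)\alpha}{d+1}$ (with $\lfloor d/2\rfloor=d/2$ or $(d-1)/2$), not your $\sigma(\alpha,d)=\tfrac d2+\tfrac14-\alpha+\tfrac{1-(-1)^d}{8d}$; the two expressions share the same zero in $\alpha$ but differ by a positive factor in slope, so your threshold is right even though the intermediate power of $R_j$ is not quite the one the computation yields.
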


We now apply the above two propositions to establish the main theorem. We shall give the proof of Proposition \ref{p1} and Proposition \ref{p2} in Section \ref{section3} and Section \ref{section4} respectively.
\begin{proof}[Proof of Theorem \ref{t1}] The proof is identical to that in \cite{guth2020,xi2022,du2021}. We include it here for completeness.
   \begin{align*}
       \int_{\Delta_{g,x}(E)}|d^x_*(\mu_{1,\text{\rm good}})|&=\int|d^x_*(\mu_{1,\text{\rm good}})|-\int_{\Delta_{g,x}(E)^c}|d^x_*(\mu_{1,\text{\rm good}})|\\
      &\geq\int|d^x_*(\mu_{1})|-2\int|d^x_*(\mu_1)-d^x_*(\mu_{1,\text{\rm good}})|\\
      &\geq1-\frac{2}{1000}.
   \end{align*} 
Here, we have used the fact that $d^x_*(\mu_1)$ is a probability measure. The final inequality is a direct consequence of Proposition \ref{p1}.

Conversely, employing Proposition \ref{p2} and Cauchy-Schwarz, we obtain
   \[\Big(\int_{\Delta_{g,x}(E)}|d^x_*(\mu_{1,\text{\rm good}})|\Big)^2\leq |\Delta_{g,x}(E)|\cdot\|d^x_*(\mu_{1,\text{\rm good}})\|^2_{L^2}\lesssim|\Delta_{g,x}(E)|.\]
  Hence, we conclude that $|\Delta_{g,x}(E)|$ is positive.
\end{proof}

\section{ proof of Proposition \ref{p1}}\label{section3}
In this section, we establish Proposition \ref{p1} by studying the pushforward measures $d^x_*(\mu_1)$ and $d^x_*(\mu_{1,\text{\rm good}})$. We will apply the geometric properties of bad tubes to control $\|d^x_*(\mu_1)-d^x_*(\mu_{1,\text{\rm good}})\|_{L^1}$. To commence, let us recall the following lemma, which is essentially from \cite{xi2022}.
\begin{lemma}\label{l2}
For each point $x\in E_2,$  define
\begin{equation*}\text{\rm Bad}_j(x):=\mathop{\bigcup}_{T\in\mathbb{T}_j:\, x\in 2T~ \text{and}~T~\text{is}~ \text{bad}}2T,~~~\forall j\geq1.
\end{equation*}
    Then one has
    \[\|d^x_*(\mu_{1,\text{\rm good}})-d^x_*(\mu_1)\|_{L^1}\lesssim\sum_{j\geq1}R^{\delta}_j\mu_1(\text{\rm Bad}_j(x))+\mathrm{RapDec}(R_0).\]
\end{lemma}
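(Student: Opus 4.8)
The plan is to estimate $\|d^x_*(\mu_{1,\text{good}})-d^x_*(\mu_1)\|_{L^1}$ by unwinding the definitions and exploiting the geometry of the wave packet decomposition. First I would write the difference of the two measures as
\[
d^x_*(\mu_1) - d^x_*(\mu_{1,\text{good}}) = \sum_{j\geq 1}\sum_{\substack{T\in\mathbb{T}_j\\ T\text{ bad}}} d^x_*(M_T\mu_1) + \text{(contributions of }\psi_{j,\pm}\text{ and }\psi_0\text{)},
\]
where the $\psi_{j,\pm}$ and $\psi_0$ pieces are kept in $\mu_{1,\text{good}}$ only through $M_0\mu_1$; the tails in $j$ and the non-tube pieces should be harmless since those wave packets are supported where $|\xi_d|/|\xi|$ is small, which by the geodesic geometry (the $d$-th coordinate separation of $B_1$ and $B_2$) makes the phase $d_g(x,y)$ non-stationary, yielding $\mathrm{RapDec}(R_0)$ after repeated integration by parts — this is exactly the role of the $\mathrm{RapDec}(R_0)$ term and of working in normal coordinates with $B_1,B_2$ separated in $x_d$.

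Next I would control $\|d^x_*(M_T\mu_1)\|_{L^1}$ for a single bad tube $T\in\mathbb{T}_{j,\tau}$. The key geometric fact, from the construction of the microlocal decomposition in \cite{xi2022}, is that $M_T\mu_1 = \eta_T\mathcal{P}_{j,\tau}\mu_1$ is essentially supported on the tube $T$ (of dimensions $\sim R_j^{-1/2+\delta}\times\cdots\times R_j^{-1/2+\delta}\times 1$) and, crucially, that when pushed forward by the pinned distance $d^x_g$, the level sets of $d_g(x,\cdot)$ are geodesic spheres which meet $T$ transversally as long as $x\notin 2T$ — more precisely, for $x\in 2T$ the image $d^x_*(M_T\mu_1)$ is supported in an interval of length $\lesssim R_j^\delta \cdot R_j^{-1/2+\delta}\cdot(\text{something})$, but the cleaner route is: total mass $\|d^x_*(M_T\mu_1)\|_{L^1}\lesssim \|M_T\mu_1\|_{L^1}\lesssim \mu_1(2T)$ up to $\mathrm{RapDec}$. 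Then the tube is only dangerous when $x\in 2T$, so summing over bad $T\in\mathbb{T}_j$ with $x\in 2T$ and incorporating the loss from the local-in-time analysis of the Fourier integral operator $d^x_g$ (which on scale $R_j$ costs at most $R_j^\delta$ because of the $\delta$-thickening and the finite overlap of the tubes $\mathbb{T}_{j,\tau}$ covering $T_0$) gives
\[
\|d^x_*(M_T\mu_1)\text{ summed over bad }T\in\mathbb{T}_j,\ x\in 2T\|_{L^1} \lesssim R_j^\delta\,\mu_1\Big(\bigcup_{\substack{T\in\mathbb{T}_j,\ x\in 2T\\ T\text{ bad}}}2T\Big) = R_j^\delta\,\mu_1(\text{Bad}_j(x)).
\]
Summing over $j\geq 1$ then yields the claimed bound.

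The steps in order: (1) decompose $\mu_1 - \mu_{1,\text{good}}$ into bad-tube wave packets plus the $\psi_{j,\pm},\psi_0$ remainder, and dispatch the remainder via non-stationary phase / integration by parts, producing $\mathrm{RapDec}(R_0)$ using the $x_d$-separation of $B_1$ and $B_2$; (2) for a fixed $j$ and a fixed bad tube $T\in\mathbb{T}_{j,\tau}$, show $\|d^x_*(M_T\mu_1)\|_{L^1}\lesssim \mu_1(2T)+\mathrm{RapDec}(R_0)$, essentially because $M_T$ has kernel rapidly concentrated on $T$ and the pushforward is an $L^1$-contraction up to that concentration; (3) observe that $d^x_*(M_T\mu_1)$ contributes nontrivially only when $x\in 2T$ (else its support is pushed off a fixed compact set, or one extracts further decay), so the sum over $T$ reduces to tubes through $x$; (4) use the bounded overlap of $\mathbb{T}_{j,\tau}$ and the definition of $\text{Bad}_j(x)$ to bound $\sum_{T\ni x,\ \text{bad}}\mu_1(2T)$ by $C R_j^\delta\,\mu_1(\text{Bad}_j(x))$, absorbing the overlap and FIO-localization losses into $R_j^\delta$; (5) sum in $j$.

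The main obstacle I expect is step (2)–(3): making rigorous the claim that a single wave packet $M_T\mu_1$ pushes forward under the \emph{variable-coefficient} pinned distance map $y\mapsto d_g(x,y)$ to something with $L^1$ mass controlled by $\mu_1(2T)$, and that it is genuinely negligible unless $x\in 2T$. In the Euclidean/flat case this is transparent because $d(x,y)=|x-y|$ and the level sets are round spheres, so a tube $T$ either passes near $x$ (and the sphere is nearly tangent along a short arc) or the sphere crosses $T$ transversally in a controlled way; on a general Riemannian manifold one must instead use that $d_g$ is a Carleson–Sj\"olin phase and that the wave packets adapted to $\psi_{j,\tau}\circ\Phi$ are exactly the ones diagonalizing the FIO $d^x_g$ at scale $R_j$ — this is the content imported from \cite{xi2022}, and the proof here will largely be a matter of quoting the stationary-phase/wave-packet estimates established there and checking they persist in dimension $d$, with the $R_j^\delta$ losses being exactly the $\delta$-thickening of the tubes. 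The curvature hypothesis is \emph{not} needed for this lemma (it is only used later in Proposition~\ref{p1} to define the projections), so I would keep the argument at the level of generality of a general metric $g$ close to flat in normal coordinates.
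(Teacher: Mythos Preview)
Your plan is correct and matches the paper's approach: reduce to showing (i) $\|d^x_*(M_T\mu_1)\|_{L^1}\lesssim\mathrm{RapDec}(R_j)$ whenever $x\notin 2T$, and (ii) $\|d^x_*(M_{j,\pm}\mu_1)\|_{L^1}\lesssim\mathrm{RapDec}(R_j)$ for all $x\in E_2$, with the remaining tubes ($x\in 2T$, $T$ bad) handled by the $L^1$-contraction bound $\|d^x_*(M_T\mu_1)\|_{L^1}\lesssim\mu_1(2T)$ and the bounded-overlap summation giving the $R_j^\delta\mu_1(\mathrm{Bad}_j(x))$ term. The paper carries out (i) and (ii) by writing $d^x_*(M_T\mu_1)(t)$ as an integral over the geodesic sphere $S^{d-1}(x,t)$, parametrizing the cap $T\cap S^{d-1}(x,t)$ explicitly in normal coordinates about $x$, and integrating by parts in the angular variable using that the angle between $(y-x)/|y-x|$ and $\xi/|\xi|$ exceeds $R_j^{-1/2+\delta}$ when $x\notin 2T$ --- exactly the stationary-phase mechanism you anticipate in your ``main obstacle'' paragraph, and indeed no curvature hypothesis is used.
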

\begin{proof}
The only differences between the above lemma and \cite[Lemma 7.3]{xi2022} are in the definitions of bad tubes and the dimensions of the underlying manifolds.
  By the arguments in \cite{xi2022} (and \cite{guth2020,du2021}), it suffices to prove that for $x\in E_2$ and $x\notin 2T$,
   \begin{equation}\label{l21}
   \|d^x_*(M_T\mu_1)\|_{L^1}\lesssim \mathrm{RapDec}(R_j),
   \end{equation}
   and for any $x\in E_2$,
  \begin{equation}\label{l23}
  \|d^x_*(M_{j,\pm}\mu_1)\|_{L^1}\lesssim \mathrm{RapDec}(R_j).
  \end{equation}
 
Now we prove \eqref{l21}.
Recall that
  \[d^x_*(M_T\mu_1)(t)=\int_{S^{d-1}(x,t)}M_T\mu_1(y)\,d\sigma(y),\]
where $\sigma(y)$ is the formal restriction of $\mu_2$ on the geodesic sphere \[S^{d-1}(x,t):=\{y: d_g(x,y)=t\}.\] Noting the separation of $E_1$ and $E_2$, we may assume that
$t\approx1$, otherwise  $d^x_*(M_T\mu_1)(t)$ is negligible. Denote
\[A_{j,\tau}(x,\xi)=\psi_{j,\tau}(\Phi(x,\xi)).\]
Since $|\hat{\mu}_1(\xi)|\leq1$ and
\begin{align*}M_T\mu_1&=\eta_T(y)\iint e^{2\pi i(y-z)\cdot\xi}A_{j,\tau}(z,\xi)\,d\mu_1(z)d\xi\\
&=\eta_T(y)\int e^{2\pi iy\xi}\hat{\mu}_1(\xi)A_{i,\tau}(y,\xi)\,d\xi,
\end{align*}
It suffices to show that  one has, uniformly in $\xi$,
\begin{equation}\label{l22}
\bigg|\int_{S^{d-1}(x,t)}\eta_T(y)A_{j,\tau}(y,\xi)e^{2\pi iy\cdot\xi}\,d\sigma(y)\bigg|\lesssim \mathrm{RapDec}(R_j).
\end{equation}
The tube $T$ intersects $S^{d-1}(x,t)$ in at most two caps, which can be dealt with in similar fashions. It suffices to consider a single cap. 
Denote by $y_0$ the center of the cap.
Since $x\notin 2T,\ y\in T\cap S^{d-1}(x,t)$ and $\xi$ is essentially in $\tau$,
we have that the angle between vectors $\frac{y-x}{|y-x|}$ and $\frac{\xi}{|\xi|}$
is greater than $R_j^{-1/2+\delta}.$ By a change of variable $z=y-x$, we see that \eqref{l22}
equals 
\begin{equation}\label{l33}
\bigg|e^{2\pi ix\cdot\xi}\int_{S^{d-1}(0,t)}\eta_T(z+x)A_{j,\tau}(z+x,\xi)e^{2\pi iz\cdot\xi}\,d\sigma(z)\bigg|.
\end{equation}
Without loss of generalities, in geodesic normal coordinates about $x$, we may assume that $z_0=y_0-x$ is given by
\[z_0=y_0-x=z(0,\theta_0)=(0,\ldots, 0,t\cos\theta_0,t\sin\theta_0),\]
 and
\[\frac{\xi}{|\xi|}=(0,\ldots, 0,\cos\omega,\sin\omega),\]
for some $\theta_0,\omega\in[0,2\pi).$

More generally, we employ the following change of coordinates:
\begin{align*}B^{d-2}(0,t)\times[0,2\pi)&\to S^{d-1}(0,t):\\(m,\theta)&\to z(m,\theta):=\big(m,\sqrt{t^2-|m|^2}\cos\theta,\sqrt{t^2-|m|^2}\sin\theta\big).\end{align*}
The associated Jacobian is $\approx 1$ in our domain of integration since $t\approx1$, thus \eqref{l33} is reduced to
\begin{equation}\label{l44}
    \bigg|\int_{B^{d-2}(0,t)}\int^{2\pi}_0\eta_T(z(m,\theta)+x)A_{j,\tau}(z(m,\theta)+x, \xi)e^{2\pi i|\xi|\sqrt{t^2-|m|^2}\cos(\omega-\theta)}\,d\theta dm\bigg|.
\end{equation}
Since the intersection angle between $\frac{y-x}{|y-x|}$ and $\frac{\xi}{|\xi|}$
is greater than $R_j^{-1/2+\delta}$, 
we have $|\theta_0-\omega|\gtrsim R_j^{-1/2+\delta}$. Furthermore, 
we have 
$|\theta-\omega|\gtrsim R_j^{-1/2+\delta}$ if $z\in S^{d-1}(0,t)$ and $z+x\in T$.
Since the cap of $T\cap S^{d-1}(x,t)$ has  dimensions $ R^{-1/2+\delta}\times\cdots\times R^{-1/2+\delta}_j$ and center $z_0=(0,t\cos\theta_0,t\sin\theta_0)$, we have $|m|\lesssim R_j^{-1/2+\delta}$. Note that $t\approx1$, so we get 
\[\frac{d}{d\theta}(|\xi|\sqrt{t^2-m^2}\cos(\omega-\theta))=|\xi|\sqrt{t^2-m^2}\sin(\theta-\omega)\gtrsim R_j^{1/2+\delta}.\]
Since \[\partial^{(k)}_{\theta}[\eta_T(z(m,\theta)+x)A_{j,\tau}(z(m,\theta)+x,\xi)]\lesssim R_j^{k(1/2-\delta)(d-1)},\]
an integration by parts argument in the $\theta$ variable shows that \eqref{l44} decays rapidly in $R_j$.

The bound \eqref{l23} can be proved similarly if we replace $T$ by $T_0$.
\end{proof}

Applying lemma \ref{l2}, Proposition \ref{p1} is reduced to estimating the measure of $\text{\rm Bad}_j(x)$.
We define \[\text{\rm Bad}_j:=\{(x_1,x_2)\in E_1\times E_2: \text{there is a bad tube } T\in\mathbb{T}_j~\text{so that }~2T~\text{contains}~x_1~\text{and}~x_2\},\]
The following lemma holds.
\begin{lemma}\label{l3}
    Let $d\geq 2$ be an  integer. If $\alpha>\frac{d}{2}$, then for sufficiently large $c(\alpha)>0$ and  every $j\geq1,$  we have
    \[\mu_1\times\mu_2(\text{\rm Bad}_j)\lesssim R_j^{-{2\delta}}.\]
\end{lemma}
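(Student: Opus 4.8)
The plan is to estimate $\mu_1\times\mu_2(\text{\rm Bad}_j)$ by a counting/covering argument over the tubes in $\mathbb{T}_j$, exploiting the fact that a bad tube carries a lot of $\mu_2$-mass but the measures satisfy the Frostman condition $\mu_i(B(x,r))\lesssim r^\alpha$ with $\alpha>d/2$. First I would fix the scale $R=R_j$ and observe that, since the tubes in $\mathbb{T}_{j,\tau}$ (over all $\tau$) cover the unit region with bounded overlap and each $2T$ has dimensions roughly $R^{-1/2+\delta}\times\cdots\times R^{-1/2+\delta}\times 1$, we have
\[
\mu_1\times\mu_2(\text{\rm Bad}_j)\ \lesssim\ \sum_{T\in\mathbb{T}_j\,:\,T\text{ bad}}\mu_1(2T)\,\mu_2(2T),
\]
up to the bounded-overlap constant. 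By the Frostman condition on $\mu_1$ applied at the transverse scale, $\mu_1(2T)\lesssim (R^{-1/2+\delta})^{\alpha-(\text{something})}$; more precisely, since $2T$ is contained in a union of $\sim R^{1/2-\delta}$ balls of radius $R^{-1/2+\delta}$ along the central geodesic, $\mu_1(2T)\lesssim R^{1/2-\delta}\cdot R^{(-1/2+\delta)\alpha}=R^{-(\alpha-1)/2 + (\alpha-1)\delta}$ — I would keep the $\delta$-powers as harmless $R^{C\delta}$ factors throughout. Then, using the badness threshold $\mu_2(4T)\ge R^{-d/4+c(\alpha)\delta}$ (even $d$) or $R^{-(d-1)/4+c(\alpha)\delta}$ (odd $d$), I would \emph{reverse} the inequality to gain: on bad tubes, $\mu_2(2T)\le\mu_2(4T)$ of course, but the key is that $\mu_2(2T)$ is bounded \emph{below} by the threshold while summing $\mu_2(4T)$ over a bounded-overlap family gives $\sum_T \mu_2(4T)\lesssim 1$; hence the \emph{number} of bad tubes intersecting a fixed region is $\lesssim R^{d/4-c(\alpha)\delta}$ (even) or $R^{(d-1)/4-c(\alpha)\delta}$ (odd).

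The heart of the matter is to combine these two facts with a bound on how much $\mu_1\times\mu_2$-mass sits on a single bad tube. I would write, for each bad $T$,
\[
\mu_1(2T)\,\mu_2(2T)\ \le\ \Big(\sup_{T'\text{ bad}}\mu_1(2T')\Big)\,\mu_2(2T)
\]
and sum: $\sum_{T\text{ bad}}\mu_2(2T)\lesssim \sum_{T\in\mathbb{T}_j}\mu_2(4T)\lesssim 1$ by bounded overlap of the dilated tubes (here I would use that a $4$-fold dilation still has bounded overlap at the cost of an absolute constant). This yields
\[
\mu_1\times\mu_2(\text{\rm Bad}_j)\ \lesssim\ \sup_{T\text{ bad}}\mu_1(2T)\ \lesssim\ R^{-(\alpha-1)/2+C\delta}.
\]
Since $\alpha>d/2$, we have $-(\alpha-1)/2 < -(d/2-1)/2 = -(d-2)/4 \le -0$ only when $d>2$... this is not yet $\le -2\delta$ for all the relevant ranges, so the naive bound is insufficient and one must actually use the \emph{constant sectional curvature} hypothesis. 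This is where the geometry enters: on a manifold of constant curvature there is a totally geodesic foliation, and the set of directions/geodesics through a fixed point behaves like the Euclidean model, so the bad tubes through $x_1$ that also pass near $x_2$ are constrained to lie in a thin neighborhood of the single geodesic through $x_1$ and $x_2$ — giving an \emph{extra} saving. I would therefore refine the estimate to: for fixed $x_1\in E_1$, the bad tubes $2T$ containing $x_1$ and meeting $E_2$ are comparable to a single tube of width $R^{-1/2+\delta}$ around the geodesic from $x_1$, so
\[
\mu_2\big(\{x_2:(x_1,x_2)\in\text{\rm Bad}_j\}\big)\ \lesssim\ \mu_2(\text{one such tube})\cdot(\#\text{ of bad directions at }x_1).
\]
Counting bad directions: since each bad tube has $\mu_2(4T)\gtrsim R^{-d/4+c\delta}$ and tubes through $x_1$ in distinct $R^{-1/2}$-separated directions have bounded overlap, the number of bad directions at $x_1$ is $\lesssim R^{d/4-c\delta}/(\text{something})$; integrating $\mu_2$ of a single tube $\lesssim R^{-(\alpha-1)/2+C\delta}$ (by the Frostman bound, as above) against this count, and then integrating in $x_1$ against $\mu_1$, I get
\[
\mu_1\times\mu_2(\text{\rm Bad}_j)\ \lesssim\ R^{d/4 - c(\alpha)\delta}\cdot R^{-(\alpha-1)/2 + C\delta}\cdot(\text{possible further Frostman gain}).
\]
For $\alpha>d/2$ the deterministic exponent $d/4-(\alpha-1)/2 = (d-2\alpha+2)/4 < (d-d+2)/4 = 1/2$, which still is not negative; the genuine gain must come from a \emph{bilinear} use of Frostman on \emph{both} $\mu_1$ and $\mu_2$ simultaneously along the foliation — i.e., from the fact that both endpoints of a bad tube are Frostman-$\alpha$ and $2\alpha > d$, so the tube cannot be charged by both measures at full strength. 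Making this precise is exactly the content of the proof and the place where the constant-curvature foliation is indispensable: it lets one slice $M$ by the family $\{\Sigma_{V,p}\}$ and apply the Euclidean-type bad-tube estimate of Du–Iosevich–Ou–Wang–Zhang on each slice, where the projected measures of dimension $>\lceil d/2\rceil/1$ ... satisfy the hypotheses of Orponen's radial projection theorem.

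Concretely, the steps in order are: (1) reduce via bounded overlap to $\sum_{T\text{ bad}}\mu_1(2T)\mu_2(2T)$; (2) use the curvature hypothesis to introduce the foliation $\{\Sigma_{V,p}\}$ and, for a generic $V$, disintegrate $\mu_1$ and $\mu_2$ along the leaves, noting that the conditional measures inherit a Frostman condition of exponent $>\lceil d/2\rceil/2$ on an $\lceil d/2\rceil$-dimensional leaf (this uses the projection/slicing lemma, presumably Lemma~\ref{l3}'s companion or the construction in Section~\ref{section5}); (3) on each leaf, the bad tubes become essentially Euclidean tubes of width $R^{-1/2+\delta}$, and the Frostman exponent of the sliced measures exceeds half the dimension of the leaf, so the Euclidean bad-tube estimate (from \cite{du2021}) applies and gives $\lesssim R^{-2\delta}$ on each leaf; (4) integrate over the leaves and over $V$ to conclude $\mu_1\times\mu_2(\text{\rm Bad}_j)\lesssim R_j^{-2\delta}$. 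The main obstacle is step (2)–(3): one must verify that slicing by the totally geodesic foliation does not destroy the Frostman condition except on a negligible set of leaves (this is a Fubini-type argument using $\mu_i(B(x,r))\lesssim r^\alpha$ with $\alpha>d/2 = $ half of $d$, hence $\alpha - \lceil d/2\rceil > \lfloor d/2\rfloor/2$ after accounting for the codimension), and that the geodesic tubes $2T$ restrict to genuine tubes on the leaves with comparable dimensions — both of which rely crucially on the totally-geodesic property available only under constant sectional curvature.
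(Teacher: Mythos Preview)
Your proposal correctly identifies the two indispensable ingredients --- the totally geodesic foliation $\{\Sigma_{V,p}\}$ available under constant curvature, and the need to pass to a lower-dimensional situation --- but the concrete plan has a genuine gap in steps (2)--(3).

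First, you are \emph{slicing} (disintegrating along the $\lceil d/2\rceil$-dimensional leaves $\Sigma_{V,p}$), whereas what is actually needed is \emph{projecting} down to the $\lfloor d/2\rfloor$-dimensional base $\exp_{p_0}(V)$ via the map $\pi_V$ sending $x$ to the unique $p$ with $x\in\Sigma_{V,p}$. The geometric reason is that a bad tube $T$ (with central geodesic $\gamma$) is \emph{contained} in some $\Sigma_{V,p}^R$, so $\pi_V(T)$ collapses to essentially a point and $\mu_2(T)\le\mu_2(\Sigma_{V,p}^R)\approx R^{-n}\,\pi_V\mu_2^R(p)$. This turns the bad-tube condition into a pointwise condition on the \emph{projected} measure $\pi_V\mu_2^R$, and after averaging over $V\in{\bf Gr}(n,d)$ one controls $\mu_1\times\mu_2(\text{Bad}_j)$ by an integral of $\pi_V\mu_2^R(\pi_V(x))$ over bad $V$'s. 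Your slicing route instead tries to restrict $\mu_i$ to each leaf and work there, but the claim that conditional measures inherit a Frostman exponent $>\lceil d/2\rceil/2$ is both unjustified (Marstrand-type slicing gives only a \emph{dimension} lower bound for a.e.\ slice, not a uniform ball condition) and quantitatively wrong: slicing drops dimension by $\lfloor d/2\rfloor$, so $\alpha>d/2$ only gives slices of dimension $>d/2-\lfloor d/2\rfloor$, which is $0$ or $1/2$, far below your claimed $\lceil d/2\rceil/2$.

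Second, there is no black-box ``Euclidean bad-tube estimate'' to invoke on each leaf; in \cite{du2021} that estimate is itself proved via orthogonal projections and Orponen's theorem, so your step (3) is circular. The paper's argument instead proceeds, after the projection reduction above, by (i) bounding the Haar measure of bad $V$'s for each fixed $x$ (using that the $V$'s with $\gamma\subset\Sigma_{V,\pi_V(\gamma)}$ form a codimension-$n$ submanifold of the Grassmannian), (ii) applying H\"older in $V$ to separate this small-measure factor from $\|\pi_V\mu_2^R(\pi_V(x))\|_{L^q(V)}$, and (iii) showing the latter is bounded uniformly in $R$ via the Peres--Schlag generalized projection theorem (for which one must verify the transversality condition $|\nabla_V d_g(\pi_V(x),\pi_V(y))|\gtrsim d_g(x,y)$ when $\pi_{V'}(x)=\pi_{V'}(y)$) together with a direct Fourier-analytic estimate on a dual factor. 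None of these steps appears in your outline, and the transversality verification in particular is the place where the constant-curvature structure (the diffeomorphism $\Pi_{p_0,p_0'}$ between Grassmannians at different basepoints and the equality \eqref{equal-submanifold}) is genuinely exercised.
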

We shall postpone the proof of Lemma \ref{l3} to Section \ref{5} and show that Proposition \ref{p1} follows from  Lemma \ref{l3}. 
\begin{proof}[Proof of Proposition \ref{p1}]
Notice that
\[\mu_1\times\mu_2(\text{\rm Bad}_j)=\int\mu_2(\text{\rm Bad}_j(x))\,d\mu_1(x).\]
For every $j\geq1$, one can choose $A_j\subset E_2$ such that 
$\mu_2(A_j)\leq R_j^{-(1/2)\delta}$ and 
\[\mu_1(\text{\rm Bad}_j(x))\lesssim R_j^{-(3/2)\delta},\qquad \forall x\in E_2\backslash A_j.\] 
We have \[\mu_2(E_2\backslash\cup_{j\geq1}A_j)\geq1-\frac{1}{1000},\] provided that $R_0$ is large enough. Thus for $x\in E'_2$, we get
\[\|d^x_*(\mu_1)-d^x_*(\mu_{1,\text{\rm good}})\|_{L^1}\lesssim\sum_{j\geq1}R^{\delta}_j\mu_1(\text{\rm Bad}_j(x))+{\rm RapDec}(R_0)\lesssim R_0^{-\delta/2}\leq \frac{1}{1000}.\]
\end{proof}

\section{Proof of Proposition \ref{p2}}\label{section4}
To establish Proposition \ref{p2}, we need the refined microlocal decoupling inequality proven in \cite{xi2022}. We shall follow the definitions and notation in \cite[Section 2]{xi2022}.

Given $1\le R\le\lambda$ and $\delta>0$. { Let $\phi(x,y)$ be a phase function supported in the unit ball satisfying: 1. the Carleson-Sj\"olin condition; 2. all principal curvatures of the $C^{\infty}$-hyperfaces
$$S_{x_0}=\{\nabla_x\phi(x_0,y): a(x_0,y)\ne0\},\qquad
S_{y_0}=\{\nabla_y\phi(x,y_0): a(x,y_0)\ne0\}$$
are nonzero and share the same sign. Note that this convexity condition is not needed in the surface case since there is only one principal curvature.
We denote by $\phi^\lambda({}\cdot{},{}\cdot{}):=\phi({}\cdot/\lambda{},{}\cdot/\lambda{})$ the rescaling of $\phi$. Similarly, $a^\lambda({}\cdot{},{}\cdot{}):=a({}\cdot/\lambda{},{}\cdot/\lambda{})$ denotes the rescaling of the corresponding amplitude.}
Let $N_{\phi^{\lambda},R}$ and $N^{\tau}_{\phi^{\lambda},R}$ be as defined in \cite[Equation (2.5)]{xi2022}. A function $f$ is said to be microlocalized to a tube $T$ if it has microlocal support essentially in $N^{\tau}_{\phi^{\lambda}, R}$ and is essentially supported in $T$ in physical space as well.
We partition $\mathbb{S}^{d-1}$ into $R^{-1/2}$-caps denoted by $\tau$. Now, we define a collection of curved tubes $\mathbb{T}=\mathbb{T}_{\tau}$ associated with the phase function $\phi^{\lambda}$ and the caps $\tau$. A curved tube $T$ with a length of $R$ and a cross-sectional radius of $R^{1/2+\delta}$ is in $\mathbb{T}_{\tau}$ if its central axis is the curve $\gamma^{y_0}_{\tau}$, defined by
\[\gamma^{y_0}_{\tau}=\left\{x:-\frac{\nabla_y\phi^{\lambda}(x,(y_0,0))}{|\nabla_y\phi^{\lambda}(x,(y_0,0))|}=\theta\right\},\]
where $y_0$ is chosen from a maximal $R^{1/2+\delta}$ separated subset of $\mathbb{R}^{d-1}$ and $\theta$ is the center of $\tau$. This microlocal decomposition is consistent with our wavepacket decomposition in Section \ref{section2}.
Indeed, in the Riemannian setting, $\phi=d_g$, and $\theta$ is the tangent vector of $\gamma^{y_0}_{\tau}$ at $(y_0,0)$. We recall the refined microlocal decoupling theorem established in \cite{xi2022} below.
\begin{theorem}[Theorem 5.1 in \cite{xi2022}]\label{t5}
  Let $2\leq p\leq\frac{2(d+1)}{d-1}$ and $1\leq R\leq\lambda$. Let $f$ be a function with microlocal support in $N_{\phi^{\lambda}, R}$. Let $\mathbb{T}$ denote a collection of tubes associated with $R^{-1/2}$-caps denoted by $\tau$, and $\mathbb{W}\subset\mathbb{T}$. For every $T\in\mathbb{W}$, we assume that $T$ is contained in $B_{R}$.  Denote $W$ as the cardinality of $\mathbb{W}$. Additionally, we assume that  $f=\sum_{T\in\mathbb{W}}f_T$, where each $f_T$ is microlocalized to $T$, and $\|f_T\|_{L^p}$ is roughly constant among all  $T\in\mathbb{W}$.
Let $Y$ be a collection of $R^{1/2}$-cubes in $B_R$, each of which intersects at most $L$ tubes $T\in\mathbb{W}$.
    Then
    \begin{equation}
        \|f\|_{L^p(Y)}\lesssim_{\varepsilon}R^{\varepsilon}\Big(\frac{L}{W}\Big)^{1/2-1/p}\Big(\sum_{T\in\mathbb{W}}\|f_T\|_{L^p}^2\Big)^{1/2}.
    \end{equation}
\end{theorem}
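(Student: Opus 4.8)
The plan is to follow the scheme of \cite{xi2022}, which is the variable-coefficient (microlocal) incarnation of the refined $\ell^2L^p$ decoupling of Guth--Iosevich--Ou--Wang \cite{guth2020}, built on the variable-coefficient decoupling of Beltran--Hickman--Sogge \cite{sogge20}. The argument has two essentially independent parts: a \emph{flat refined decoupling} for the paraboloid, and a \emph{transference} that replaces $\phi^\lambda$ by its flat model.

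For the flat part, by the pigeonholing hypothesis we may assume $\|f_T\|_{L^p}$ is constant over $T\in\mathbb{W}$, and by the usual weight and localization reductions we may work on a union $Y$ of $R^{1/2}$-cubes. There are two lossless estimates to combine. At $L^2$, since each $f_T$ is essentially supported on the single $R^{1/2}$-tube $T$, only the $\le L$ tubes meeting a given $R^{1/2}$-cube $q$ contribute on $q$; local orthogonality of the $f_T$ together with Cauchy--Schwarz over these $\le L$ tubes gives $\|f\|_{L^2(q)}^2\lesssim L\sum_{T\ni q}\|f_T\|_{L^2(q)}^2$, hence $\|f\|_{L^2(Y)}^2\lesssim L\sum_{T}\|f_T\|_{L^2}^2$. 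At the critical exponent $p_0=2(d+1)/(d-1)$ one has the lossless Bourgain--Demeter bound $\|f\|_{L^{p_0}(B_R)}\lesssim_\varepsilon R^\varepsilon\big(\sum_\theta\|f_\theta\|_{L^{p_0}}^2\big)^{1/2}\sim R^\varepsilon W^{1/2}\|f_T\|_{L^{p_0}}$, where $\theta$ runs over the $R^{-1/2}$-caps. The factor $(L/W)^{1/2-1/p}$ for intermediate $2\le p\le p_0$ is produced by an induction on the number of dyadic scales between $1$ and $R$: at each stage one decouples into caps of an intermediate size using the Bourgain--Demeter inequality on balls of an intermediate radius, applies the inductive hypothesis after a parabolic rescaling to the pieces, and multiplies the resulting tube-count gains, using a counting lemma that factors the incidence parameter $L$ and the total count $W$ across the two scales. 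The $L^2$ orthogonality above is precisely what makes each relocalization step lossless.

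For the transference, on a unit geodesic ball we have $\lambda\ge R$, so after rescaling, $\phi^\lambda=\phi(\cdot/\lambda,\cdot/\lambda)$ is on every $R$-ball an $O(R/\lambda)$-perturbation of its second-order Taylor polynomial; the Carleson--Sj\"olin condition together with the hypothesis that the principal curvatures of $S_{x_0}$ are nonzero and of the same sign means that this quadratic model is, after an affine change of variables, the elliptic paraboloid. Hence the flat decoupling, and then the flat refined decoupling of the previous step, apply to $\phi^\lambda$ at every scale with uniform constants, and the standard induction on scales upgrades these to the full estimate at scale $R$. Throughout, one tracks how the curved tubes $\gamma^{y_0}_\tau$ and the microlocal regions $N^\tau_{\phi^\lambda,R}$ transform under the parabolic rescalings used to flatten $\phi^\lambda$, so that the hypothesis that each $R^{1/2}$-cube of $Y$ meets at most $L$ tubes of $\mathbb{W}$ is preserved, up to an $R^\varepsilon$ loss, at every stage.

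The flat part is essentially a black box (Bourgain--Demeter plus $L^2$ orthogonality plus the scale iteration). The main obstacle is the variable-coefficient bookkeeping: the wave packets of $\phi^\lambda$ live on genuinely curved tubes, two of which can be tangent along a long sub-segment, and one must ensure both that the incidence count defining $Y$ is stable under the affine normalizations used to flatten the phase and that the $O(R/\lambda)$ errors from non-flatness never corrupt the tube count --- this is precisely where $R\le\lambda$ and the smallness of $\delta$ are used.
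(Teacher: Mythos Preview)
The paper does not prove this theorem at all: Theorem~\ref{t5} is stated with the label ``Theorem 5.1 in \cite{xi2022}'' and is simply \emph{recalled} as an input from that reference, with the sentence ``We recall the refined microlocal decoupling theorem established in \cite{xi2022} below.'' There is therefore no proof in the present paper to compare your proposal against.

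That said, your sketch is a faithful high-level account of the strategy in \cite{xi2022}: the flat refined decoupling of \cite{guth2020} combined with the variable-coefficient transference of \cite{sogge20}, with the induction-on-scales bookkeeping to preserve the incidence parameter $L$ under parabolic rescaling. If your goal is to make the present paper self-contained, this outline is on the right track; but for the purposes of this paper the correct ``proof'' is simply the citation.
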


Now, we are ready to prove Proposition \ref{p2}.
The proof is very similar to that in \cite{xi2022}. For completeness and convenience
of readers, we include the argument here.

\begin{proof}[Proof of Proposition \ref{p2}]
{ We first assume that $d$ is even.}
Recalling the definition of $d^x_*(\mu_{1,\text{\rm good}})$, we have
\[d^x_*(\mu_{1,\text{\rm good}})(t)\approx\int_{d_g(x,y)=t}\mu_{1,\text{\rm good}}(y)\,dy=\iint e^{-2\pi i(d_g(x,y)-t)\tau}\,d\tau\mu_{1,\text{\rm good}}(y)\,dy.\]
By Plancherel's theorem,  we get
\[\|d^x_*(\mu_{1,\text{\rm good}})\|_{L^2}^2\approx\int\Big|\int e^{-2\pi i\lambda d_g(x,y)}\mu_{1,\text{\rm good}}(y)\,dy\Big|^2d\lambda.\]
Recall that $R_j=2^jR_0$. It suffices to prove that for every $j$, 
\begin{equation}\label{gg5}
\Big|\iint_{\lambda\approx R_j}\Big|\int e^{-2\pi i\lambda d_g(x,y)}\mu_{1,\text{\rm good}}(y)\,dy\Big|^2\,d\lambda\,d\mu_2(x)\Big|\lesssim 2^{-j\varepsilon} 
\end{equation}
holds for some $\varepsilon>0.$
Noting that  $\lambda\approx R_j,$  by a standard integration by parts argument, we reduce \eqref{gg5} to estimating
\begin{equation}\label{b2}
\iint_{\lambda\approx R_j}\Big|\int e^{-2\pi i\lambda d_g(x,y)}\mu^j_{1,\text{\rm good}}(y)\,dy\Big|^2\,d\lambda\,d\mu_2(x),
\end{equation}
where 
\[\mu^j_{1,\text{\rm good}}:=\sum_{T\in\mathbb{T}_j, T ~\text{is good}}M_T\mu_1.\]
Denote
\[F(x):=\int e^{-2\pi i\lambda d_g(x,y)}\mu^j_{1,\text{\rm good}}(y)\,dy,\]
and
\[F_T(x):=\int e^{-2\pi i\lambda d_g(x,y)}M_T\mu_1(y)\,dy.\]
It follows that
\[F(x)=\sum_{T\in\mathbb{T}_j, T~\text{is good}}F_T(x).\]
Now we choose $R=\lambda^{1-2\delta}\approx R_j^{1-2\delta}.$  One sees that each $T$ is a geodesic tube of dimensions $R^{-1/2}\times\cdots\times R^{-1/2}\times 1$. After rescaling, it is straightforward to check that both $F$ and $F_T$ satisfy the assumptions of Theorem \ref{t5}. We refer the readers to \cite{xi2022} for more detail. 

Let $p_c:=\frac{2(d+1)}{d-1}$. By dyadic pigeonholing, it suffices to consider tubes $T\in \mathbb{W}$ where $\|F_T\|_{L^{p_c}}\sim\beta>0.$  Let $W$ denote as the cardinality of $\mathbb{W}$.

Now we do the integration over $d\mu_2(x)$. To this end, we introduce a non-negative radial bump function $\rho_{j}\in C^{\infty}_0$ such that it equals to $1$ in $B_{10R_j}(0)$ and $0$ outside $B_{20R_j}(0)$. Notice that $F$ is supported in the unit ball  with $\widehat{F}(\xi)(1-\rho_{j})(\xi)=\mathrm{RapDec}(|\xi|)$. Therefore
\[F=(\widehat{F}\cdot\rho_{j})^{\vee}+\mathrm{RapDec}(R_j)=F*\widehat{\rho_{j}}+\mathrm{RapDec}(R_j).\]
We have
\begin{align*}
    \int|F(x)|^2\,d\mu_2(x)&=\int|F*\widehat{\rho_{j}}(x)|^2\,d\mu_2(x)+\mathrm{RapDec}(R_j)\\
    &\lesssim\int |F|^2*\widehat{\rho_{j}}(x)\,d\mu_2(x)+\mathrm{RapDec}(R_j)\\
    &=\int_{B_1(0)}|F(x)|^2|\widehat{\rho_{j}}|*\mu_2(x)\,dx+\mathrm{RapDec}(R_j).
\end{align*}
Let $\mu_{2,j}:=|\widehat{\rho_{j}}|*\mu_2$. Divide $B_1(0)$ into $R^{-1/2}$-cubes $Q$. Using 
dyadic pigeonholing again, it suffices to consider
\[\mathcal{Q}_{r,L}=\Big\{Q:\mu_{2,j}(Q)\sim r~\text{and}~Q~\text{intersects}~\sim L~\text{tubes}~T\in\mathbb{W}\Big\},\]
and further restrict the integration domain of $|F|^2$ from $B_1(0)$ to $Y_{r,L}=\cup_{Q\in\mathcal{Q}_{r,L}}Q.$

Applying H\"older's inequality, we get
\begin{align}\label{b5}
    \int_{Y_{r,L}}|F|^2\mu_{2,j}(x)\,dx\lesssim\bigg(\int_{Y_{r,L}}|F(x)|^{p_c}\bigg)^{2/{p_c}}
    \bigg(\int_{Y_{r,L}}\mu_{2,j}(x)^{{p_c}/({p_c}-2)}\,dx\bigg)^{({p_c}-2)/{p_c}}.
\end{align}
By Theorem \ref{t5}, one can bound the first factor by
\begin{equation}\label{b3}
    \big\|F\big\|_{L^{p_c}(Y_{r,L})}\lesssim R^{\varepsilon}\left(\frac{L}{W}\right)^{\frac{1}{2}-\frac{1}{{p_c}}}\bigg(\sum_{T\in\mathbb{W}_{\beta}}\|F_T\|_{L^{p_c}}^2\bigg)^{\frac{1}{2}}.
\end{equation}
To estimate the second factor, we note that 
$|\widehat{\rho_{j}}|$ decays rapidly outside $B_{R_j^{-1}}$, and thus
\begin{equation*}
    \mu_{2,j}(x)=\int|\widehat{\psi}_{R_j}(x-y)|\,d\mu_2(y)\lesssim R^d_j\cdot\mu_2(B_{R_j^{-1}})+\mathrm{RapDec}(R_j)\lesssim R^{d-\alpha}_j.
\end{equation*}
So we have
\begin{equation}\label{b4}
    \int_{Y_{r,L}}\mu_{2,j}(x)^{{p_c}/({p_c}-2)}\,dx\lesssim R_j^{2(d-\alpha)/({p_c}-2)}\mu_{2,j}(Y_{r,L}).
\end{equation}
Combining \eqref{b5}, \eqref{b3} and \eqref{b4}, by Lemma 8.1 in \cite{xi2022} and the definition of the bad tube, we
have
\begin{equation}\label{b6}
\begin{aligned}
    \int_{\lambda\approx R_j}\int|F(x)|^2\,d\mu_2(x)\,d\lambda&\lesssim_{\delta} R_j^{O(\delta)+(\frac{5}{2{p_c}}-\frac{1}{4})d-\frac{2\alpha}{{p_c}}}\sum_{T\in \mathbb{T}_j}\int_{\lambda\approx R_j}\|F_T\|^2_{L^{p_c}}\,d\lambda\\
    &\lesssim_{\delta} R_j^{O(\delta)+(\frac{5}{2{p_c}}-\frac{1}{4})d-\frac{2\alpha}{{p_c}}}\cdot |T|^{2/{p_c}}\sum_{T\in \mathbb{T}_j}\int_{\lambda\approx R_j}\|F_T\|^2_{L^{\infty}}\,d\lambda.
\end{aligned}
\end{equation}
Here, in the last inequality, we have used the fact that $F_T$ is essentially supported on
$2T$.

To bound the right-hand side of \eqref{b6}, we recall that 
$$F_T(x)=\int\Big(\int_{2T}e^{-2\pi i(\lambda d_g(x,y)y\cdot\xi)}\,dy\Big)\widehat{M_T
\mu_1}(\xi)\,d\xi.$$
This means that $\xi $ essentially lies in the $\lambda/R\approx R_j^{2\delta}$-neighborhood of $\lambda\tau,$ where $\tau \subset\mathbb{S}^{d-1}_y$ is a $R^{-1/2}$-cap. By Cauchy--Schwartz, we get
\begin{equation}\label{ee10}
\begin{aligned}
\|F_T\|^2_{L^{\infty}}&\lesssim\bigg(\int_{2T}\int|\widehat{M_T\mu_1}(\xi)|\psi^{\tau}_{\phi^{\lambda},R}(\lambda y,\xi /\lambda)\,d\xi dy\bigg)^2\\
&\lesssim\int_{2T}\int|\widehat{M_T\mu_1}(\xi)|^2\psi^{\tau}_{\phi^{\lambda},R}(\lambda y,\xi/\lambda)\,d\xi dy\int_{2T}\int\psi^{\tau}_{\phi^{\lambda},R}(\lambda y,\xi/y)\,d\xi dy.
\end{aligned}
\end{equation}
Note that for fixed $y\in 2T$,
\[\int\psi^{\tau}_{\phi^{\lambda},R}(\lambda y,\xi/y)\,d\xi\lesssim R^{O(\delta)}_j\cdot\lambda^{d-1}\cdot R^{-(d-1)/2},\]
So we have
\begin{equation}\label{ee11}
\int_{2T}\int\psi^{\tau}_{\phi^{\lambda},R}(\lambda y,\xi/y)\,d\xi dy\lesssim R^{O(\delta)}_j\cdot\lambda^{d-1}\cdot R^{-(d-1)/2}\cdot |T|\lesssim R_j^{O(\delta)}. 
\end{equation}
By the definition of $\psi^{\tau}_{\phi^{\lambda},R}$, we, meanwhile,  have the following
\begin{equation}\label{ee12}
\int_{\lambda\approx R_j}\psi^{\tau}_{\phi^{\lambda},R}(\lambda y,\xi/\lambda)\,d\lambda\lesssim R_j^{O(\delta)},
\end{equation}
uniformly holds in $y\in 2T$.
Combine \eqref{ee10},\eqref{ee11} and \eqref{ee12} and recall that $\widehat{M_T\mu_1}(\xi)$ is independent of $\lambda$, it follows that
\[\int_{\lambda=R_j}\|F_T\|^2_{L^{\infty}}\,d\lambda\lesssim R_j^{O(\delta)}\cdot|T|\cdot\int|\widehat{M_T\mu_1}(\xi)|^2\,d\xi.\]
Recall that $M_T=\eta_T\mathcal P_{j,\tau}$. Since $\mathcal P_{j,\tau}$ behaves like a $0$-th order pseudodifferential operator, $M_T$ is bounded on $L^2$.
By invoking Plancherel twice, we have
\begin{align*}  
\sum_{T\in\mathbb{T}_j}\int_{\lambda\approx R_j}\|F_T\|^2_{L^{\infty}}\,d\lambda
&\lesssim R_j^{O(\delta)}\cdot R^{-(d-1)/2}\int_{|\xi|\approx R_j}|\hat{\mu}_1(\xi)|^2\,d\xi,
\end{align*}
Inserting this bound into \eqref{b6} and recalling that $p_c=\frac{2(d+1)}{d-1}$, we have
\begin{align*}  \int_{\lambda\approx R_j}\int|F(x)|^2\,d\mu_2(x)\,d\lambda&\lesssim
R_j^{O(\delta)-\frac{d}{2(d+1)}-\frac{(d-1)\alpha}{d+1}}\int_{|\xi|\approx R_j}|\hat{\mu}_1(\xi)|^2\,d\xi\\
&\lesssim R_j^{O(\delta)+d-\alpha-\frac{d}{2(d+1)}-\frac{(d-1)\alpha}{d+1}}I_{\alpha-\delta}(\mu_1).
\end{align*}
Here
\[I_{s}(\mu_1)=\iint|x-y|^{-s}\,d\mu_1(x)\,d\mu_2(y)=c_{s,d}\int_{\mathbb{R}^d}
|\xi|^{-(d-s)}|\hat{\mu}_1(\xi)|^2\,d\xi,\]
denotes the energy integral of $\mu_1$, which is finite for any $s\in(0,\alpha)$ (see, e.g., \cite{mattila2015}).
We conclude the proof by noting that $d-\alpha-\frac{d}{2(d+1)}-\frac{(d-1)\alpha}{d+1}<0$ if  $\alpha$ is greater than $\frac{d}{2}+\frac{1}{4}$ and $\delta$ is sufficiently small.

{ If $d$ is odd, recall that $T$ is a good tube if
\[\mu_2(4T)\leq R_j^{-(d-1)/4+c(\alpha)\delta},\]
then by the same discussion as above, we have the following estimates.
\begin{align*}  \int_{\lambda\approx R_j}\int|F(x)|^2\,d\mu_2(x)\,d\lambda&\lesssim
R_j^{O(\delta)-\frac{d-1}{2(d+1)}-\frac{(d-1)\alpha}{d+1}}\int_{|\xi|\approx R_j}|\hat{\mu}_1(\xi)|^2\,d\xi\\
&\lesssim R_j^{O(\delta)+d-\alpha-\frac{d-1}{2(d+1)}-\frac{(d-1)\alpha}{d+1}}I_{\alpha-\delta}(\mu_1),
\end{align*}
where the power of $R_j$ is negative if
$d-\alpha-\frac{d-1}{2(d+1)}-\frac{(d-1)\alpha}{d+1}<0$ and $\delta>0$ is small enough, that is, $\alpha>\frac{d}{2}+\frac{1}{4}+\frac{1}{4d}.$}
\end{proof}

\section{Proof of Lemma \ref{l3}}\label{section5}\label{5}

Lemma \ref{l3} generalizes the bound on bad tubes in  \cite{du2021} from the Euclidean case to Riemannian manifolds. In Euclidean space, it follows directly from the classical Marstrand's orthogonal projection theorem and a radial projection estimate due to Orponen \cite{orponen19}. Here, we would like to pursue a slightly different approach to prove Lemma \ref{l3} on Riemannian manifolds.

Consider the origin $p_0=(0,\ldots,0)$ in our coordinate system. Let us introduce a family of submanifolds associated with each element $V$ in the Grassmannian ${\bf Gr}(n,d)$ in the tangent space of $p_0$. That is, each $V\in {\bf Gr}(n,d)$ is a $n$-dimensional linear subspace of $T_{p_0} M$. 
To prove Lemma \ref{l3}, we take $n=\lfloor d/2\rfloor$, although many arguments work for general $n$.

Fixing $V$, we consider the Fermi coordinates associated to $V$. See \cite{gray2003tubes} for a detailed discussion on Ferimi coordinates. These coordinates are of the form \[(x^V_1,\ldots,x^V_n,x^V_{n+1},\ldots,x^V_d),\] where $\exp_{p_0}(V)$ is locally given by \[\{(x^V_1,\ldots,x^V_n,0,\ldots,0):|x^V_i|<1,\,i=1,\ldots, n\}.\] Also, for each point $p=(p^V_1,\ldots,p^V_n,0,\ldots,0)\in \exp_{p_0}(V)$, the submanifold \[\Sigma_{V,p}:=\{(p^V_1,\ldots,p^V_n,x^V_{n+1},\ldots,x^V_d):|x^V_i|<1,\,i=n+1,\ldots, d\}, \] is the union of all geodesics perpendicular to $\exp_{p_0}(V)$ at the point $p$, which is locally a smooth submanifold orthogonal to $\exp_{p_0}(V)$. Moreover, $\{\Sigma_{V,p}\}_{p\in\exp_{p_0}(V)}$ is a local foliation of $M$
near the point $p_0$. Notice that for $p_1, p_2\in \exp_{p_0}(V)$ in a small neighborhood of $p_0$, 
\begin{equation}\label{equal-distance-foliation} d_g(p_1, p_2)\approx d_g(\Sigma_{V, p_1}, \Sigma_{V, p_2})\approx d_g(p, \Sigma_{V, p_2})\end{equation}
uniformly for $p\in \Sigma_{V, p_1}$. That is, $\Sigma_{V, p}$ are locally ``parallel''.

With this foliation, one can define orthogonal projection by
\[\pi_V: (x^V_1,\ldots,x^V_d)\mapsto(x^V_1,\ldots,x^V_n,0,\ldots,0).\]
Then $\pi_V\mu$ is a well defined measure on $\exp_{p_0}(V)$, and in particular
\[\pi_V\mu(p)=\int_{\Sigma_{V,p}}\mu\]
if $\mu$ has continuous density. 
In fact, we may always assume our measure has a continuous density: when dealing with $\mu(\Sigma^R)$, where $\Sigma^R$ denotes the $R^{-1}$-neighborhood of a codimension $k$ submanifold $\Sigma\subset M$, one can consider the $R^{-1}$-resolution of $\mu$, namely,
\[\mu^R(x):= \frac{\int \phi(R\cdot d_g(x,y))\,d\mu(y)}{\int \phi(R\cdot d_g(x,y))\,dy},\]
where $\phi\in C_0^\infty(\mathbb{R})$, positive. Then one can easily see that
\begin{equation}\label{localization}\mu(\Sigma^R)\lesssim R^{-k}\int_{\Sigma}\mu^{R/2} \lesssim \mu(\Sigma^{R/4}).\end{equation}
In other words when working in the scale $R^{-1}$ one can replace $\mu$ by its $R^{-1}$-resolution $\mu^R$.

Here are the key geometric features of our construction using Fermi coordinates. We claim that each geodesic segment $\gamma$ near $p_0$ is contained in some $\Sigma_{V,p}$ for suitable  $V$'s and $p$'s. Indeed, suppose that the distance between $\gamma$ and $p_0$ is minimized at the point $p_m\in\gamma$. Then for any $V$ such that $\exp_{p_0}^{-1}(p_m)\in V$ and $\exp_{p_0}(V)\perp \gamma$ at $p_m$,{ we have $\gamma\subset \Sigma_{V,p_m}$ since $\Sigma_{V,p_m}$ is exactly the union of all geodesics perpendicular to $\exp_{p_0}(V)$ at the point $p_m$.} It follows that $\pi_V(\gamma)=\{p_m\}$ is a single point and 
\begin{equation}\label{codimension-d/2-submanifold}\{V: \gamma\subset\Sigma_{V, \pi_V(\gamma)}\}=\{V: \exp_{p_0}(V)\perp\gamma
\}
\end{equation}
is a $n(d-n-1)$ dimensional submanifold of ${\bf Gr}(n,d)$. In addition, there exists a natural diffeomorphism between the Grassmannian at different points. More precisely, let $p_0'$ be another point in our coordinate system, then for every $V=V_{p_0}$ at $p_0$, since $\Sigma_{V, \pi_V (p_0')}$ is a submanifold containing $p_0'$, {the tangent space $T_{p_0'}\Sigma_{V, \pi_V (p_0')}$ is a linear subspace of $T_{p_0'}M$. Hence the map
\[\Pi_{p_0,p_0'}:V_{p_0}\mapsto \left(T_{p_0'}\Sigma_{V, \pi_V (p_0')})\right)^{\perp}=:V_{p_0'}\]
is the desired diffeomorphism. And by this definition, we have 
\begin{equation}\label{equal-submanifold}
    T_{p_0'}\Sigma_{V, \pi_V (p_0')}=V^\perp_{p'_0}
\end{equation}}
One can also define orthogonal projections as above with $p_0'$ as the origin, and in particular, \eqref{equal-distance-foliation} still holds. Recall that in our coordinate chart, $|g_{ij}-\delta_{ij}|<\epsilon_0$. We shall choose $\epsilon_0$ to be small enough such that the implicit constant in \eqref{equal-distance-foliation} is uniform under different origins. 

We remark that the foliations given by $V_{p_0}$ at $p_0$ and $V_{p_0'}$ at $p_0'$ are the same if and only if the underlying manifold is flat. 
Now we start the proof of Lemma \ref{l3}. For convenience, we denote $R:=R_j^{1/2-\delta}$.

We first decompose the manifold ${\bf Gr}(n,d)$ into a finitely overlapping union of $R^{-1}$-``balls", or more precisely, $R^{-1}$-neighborhoods of $R^{-1}$-separated $n$-dimensional subspaces. We denote the collection of the centers of these ``balls" by $\mathcal{V}_j$. Notice that $\#(\mathcal{V}_j)\approx R^{n(d-n)}$  as the dimension of ${\bf Gr}(n,d)$ is $n(d-n)$.

Next, for each $V\in\mathcal{V}_j$, one can cover the neighborhood of $p_0$ under consideration by $R^{-1}$-neighborhood of $\Sigma_{V, p_l}$, where $p_l$ are $R^{-1}$-separated points {in $\exp_{p_0}V$}. This is also a finitely overlapping cover. We denote by $\Sigma^R_{V, p}$ the $16 R^{-1}$-neighborhood of $\Sigma_{V, p}$. We say $\Sigma_{V,p}$ is bad if $\Sigma^R_{V, p}$ contains some bad tube $T\in\mathbb{T}_j$.

Now, we can estimate 
\[\mu_1\times\mu_2(\text{\rm Bad}_j)=\int\mu_2(\text{\rm Bad}_j(x))\,d\mu_1(x)=\int\sum_{x\in T\in\mathbb{T}_j,\text{bad}}\mu_2(T)\,d\mu_1(x).\]
By our discussion above, for every tube $T\in\mathbb{T}_j$ with central geodesic $\gamma$,
\[\{V: \gamma\subset\Sigma_{V, \pi_V(\gamma)}\}\subset {\bf Gr}(n,d)\]
is a $n(d-n-1)$-dimensional submanifold. Therefore, by dimension counting
\begin{equation}\label{dimension-counting}
    \#\{\Sigma_{V,p_l}: T\subset \Sigma^R_{V,p_l}\}\approx R^{n(d-n-1)}
\end{equation}
and we have
\begin{equation}\label{reduction-to-d/2-dimensional}\begin{aligned}
    \int\sum_{T:x\in T\in\mathbb{T}_j,\text{bad}}\mu_2(T)\,d\mu_1(x)=&\iint\sum_{T:x\in T\in\mathbb{T}_j,\text{bad}}\chi_T(y)\,d\mu_2(y)\,d\mu_1(x)\\ \lesssim & R^{-n(d-n-1)}\cdot\iint\sum_{l:x\in \Sigma^R_{V,p_l},\text{bad}}\chi_{\Sigma^R_{V,p_l}}(y)\,d\mu_2(y)\,d\mu_1(x)\\= & R^{-n(d-n-1)}\cdot\int\sum_{l:x\in \Sigma^R_{V,p_l},\text{bad}}\mu_2(\Sigma^R_{V,p_l})\,d\mu_1(x).\end{aligned}\end{equation}
Notice that by our definition, $\Sigma^R_{V,p_l}\subset\Sigma^{R/4}_{V,\pi_V(x)}$ whenever $\Sigma^R_{V,p_l}$ contains $x$. Therefore, the last line of \eqref{reduction-to-d/2-dimensional} is
\begin{equation}\label{reduction-to-discretie-radial-proj}\lesssim R^{-n(d-n-1)}\cdot\int\sum_{V\in\mathcal{V}_j: \Sigma_{V,\pi_V(x)} \text{  bad}}\mu_2(\Sigma^{R/4}_{V,\pi_V(x)})\,d\mu_1(x).\end{equation}

Now, we would like to change from taking sum in $V\in\mathcal{V}_j$ to taking integral in $V\in {\bf Gr}(n,d)$ over the Haar measure $\lambda$. By \eqref{localization}, for every $V'$ lying in a $R^{-1}$-neighborhood of $V$ in ${\bf Gr}(n,d)$, we have
\[\mu_2(\Sigma^{R/2}_{V,\pi_V(x)})\lesssim R^{-n}\int_{\Sigma^{R/2}_{V',\pi_{V'}(x)}}\mu_2^{R/8}=R^{-n}\cdot\pi_{V'}\mu_2^{R/8}(\pi_{V'}(x)).\]
Then we integrate in $V'\in B_{R^{-1}}(V)$ on both sides to obtain, by dimension counting,
\[\mu_2(\Sigma^{R/2}_{V,\pi_V(x)})\lesssim R^{n(d-n)-n}\cdot\int_{B_{R^{-1}}(V)}\pi_{V'}\mu_2^{R/8}(\pi_{V'}(x))\,d\lambda(V').\]
Recall elements in $\mathcal{V}_j$ are $R^{-1}$-separated, so $B_{R^{-1}}(V)$, $V\in\mathcal{V}_j$ has finite overlap. This means we have successfully reduce \eqref{reduction-to-discretie-radial-proj} to estimating
\begin{equation}\label{reduction-to-integral}\int\int_{V: \Sigma_{V,\pi_V(x)\text{ bad}}}\pi_{V}\mu_2^{R}(\pi_{V}(x))\,d\lambda(V)\,d\mu_1(x).\end{equation}

We need to understand the measure of bad $\Sigma_{V,\pi_V(x)}$, for each fixed $x$. Recall under our notation each bad $T\in \mathbb{T}_j$ satisfies {$\mu_2(4T)\geq{R_j}^{-n/2+c(\alpha)\delta}$, and the supports of $\mu_1, \mu_2$ are separated. Therefore for each $x$ the number of bad $T\in\mathbb{T}_j$ containing $x$ is $\lesssim {R_j}^{n/2-c(\alpha)\delta}\cdot R^{\delta}_j$. Also \eqref{codimension-d/2-submanifold} implies that for each $T\in \mathbb{T}_j$, the set of $V$ such that $T\subset\Sigma^{R/4}_{V,\pi_V(x)}$ lies in the $R^{-1}$-neighborhood of a codimension $n$ submanifold. Together, one can conclude that
\[\lambda\{V\in {\bf Gr}(n,d): \Sigma_{V,\pi_V(x)}\text{ is bad}\}\lesssim R^{-n}\cdot {R_j}^{n/2-c(\alpha)\delta+\delta}={R_j}^{-(c(\alpha)-(n+1))\delta}.\]}
Thus by H\"older's inequality, for every $1<q<\infty$ and $1/q+1/q'=1$, \eqref{reduction-to-integral} is bounded from above by
\begin{equation}\label{first-Holder}\begin{aligned} & {R_j}^{\frac{-(c(\alpha)-(n+1))\delta}{q'}}\cdot \int \|\pi_V\mu_2^{R}(\pi_V(x))\|_{L^q(V)}\,d\mu_1(x).\end{aligned}\end{equation}

It remains to show that there exists $q>1$ such that the integral
\begin{equation}\label{reduction-to-Lp}\int \|\pi_V\mu_2^{R}(\pi_V(x))\|_{L^q(V)}\,d\mu_1(x)<\infty,\end{equation}
and the upper bound is independent of $R$. Then the factor ${R_j}^{\frac{-(c(\alpha)-(n+1))\delta}{q'}}$ is $\lesssim {R_j}^{-2\delta}$ as desired when $c(\alpha)$ is chosen large enough. 

To prove this $L^q$ estimate, we denote
\[f(V, \pi_V(x)):=\frac{\pi_V\mu_2^{R}(\pi_V(x))^{q/q'}}{\|\pi_V\mu_2^{R}(\pi_V(x))\|_{L^q(V)}^{q/q'}}\]
and rewrite \eqref{reduction-to-Lp} into
\begin{equation}\label{reduction-to-linear}\begin{aligned}&\int \int \pi_V\mu_2^{R}(\pi_V(x))\,f(V, \pi_V(x))\,d\lambda(V)d\mu_1(x)
\\=&\int\int_V \pi_V\mu_2^{R}(p)\,f(V, p)\,d\pi_V\mu_1(p)\,d\lambda(V).\end{aligned}\end{equation}
The only property we need from the function $f$ is, for every $x$,
\[\int |f(V,\pi_V(x))|^{q'}\,d\lambda(V)=1.\]

Recall that for each $V$, under Fermi coordinates the submanifold $\exp_{p_0}(V)$ can be identified with $\mathbb{R}^{n}$. We are now ready to run some Fourier analysis. Take $n\leq\frac{d}{2}<\alpha'<\alpha$. In $\mathbb{R}^n$, it is well known that, for every compactly supported continuous function $g$ and compactly supported measure $\nu$,
\[\left(\int g\,d\nu\right)^2=\left(\int \hat{g}\cdot\overline{\hat{\nu}}\right)^2\leq \int |\widehat{g}(\xi)|^2|\xi|^{\alpha'-n}\,d\xi \cdot \int |\widehat{\nu}(\xi)|^2|\xi|^{n-\alpha'}\,d\xi.\]
By applying this inequality with $g=\pi_V\mu_2^R$ and $d\nu=f(V,{}\cdot{})\,d\pi_V\mu_1$, the square of \eqref{reduction-to-linear} is bounded from above by
\[\iint |\widehat{\pi_V\mu_2^{R}}(\xi)|^2|\xi|^{\alpha'-n}\,d\xi\,d\lambda(V)\cdot \iint |\widehat{f(V,\cdot)\,d\pi_V\mu_1}(\xi)|^2|\xi|^{n-\alpha'}\,d\xi\,d\lambda(V)=:I\cdot II\]

We consider $I$ and $II$ separately. For $I$, we shall show that the projection map $\pi_V(x)$ satisfies the transversality condition for the generalized projection estimate of Peres--Schlag \cite{R00}. More precisely, one needs to check that
$$|\nabla_V(d_g(\pi_V(x),\pi_V(y))|\gtrsim d_g(x,y)$$
at $V=V'$ if $\pi_{V'}(x)=\pi_{V'}(y)$. To see this, recall that there is a natural diffeomorphism $\Pi_{p_0,y}$ between $n$-dimensional subspaces $V_{p_0}\subset T_{p_0}M$ and $V_y\subset T_yM$. By definition of $\Pi_{p_0,y}$ we have $\pi_{V'_{p_0}}(x)=\pi_{V'_{p_0}}(y)$ { if and only if $\exp_y^{-1}(x)\perp V_y'$.
With this condition in mind, we work on the tangent space $T_yM$. One can see that, for every small $\delta>0$, there exists $V_y''$ which is $\approx \delta$ close to $V_y'$ and satisfies $|\angle(V_y'',\exp_y^{-1}(x))-\frac\pi2|\approx\delta$.  By \eqref{equal-submanifold}, the tangent space of $\Sigma_{V'',\pi_{V''}(y)}$ is exactly $(V_y'')^\perp$. Combining this fact with equation \eqref{equal-distance-foliation}, and our definition of $\pi_V$, we have
$$d_g(x,y)\cdot\delta\lesssim  d_g(\Sigma_{V'',\pi_{V''}(y)}, x)\approx d_g(\pi_{V''}(y), \pi_{V''}(x)),$$}
where $V''\subset T_{p_0}M$ is the image of $V''_y\subset T_yM$ under the diffeomorphism $\Pi^{-1}_{p_0,y}$.

Since $V_y', V_y''$ are $\delta$-close, so are $V', V''$. Hence, one can conclude that
$$|\nabla_V d_g(\pi_{V}(y), \pi_{V}(x))|\gtrsim d_g(x,y)$$
at $V'$ given $\pi_{V'}(x)=\pi_{V'}(y)$, as desired.

Once the transversality condition is satisfied, one can apply the generalized projection estimate of Peres--Schlag to obtain
\[I\lesssim I_{\alpha''}(\mu_2^R)=\iint d_g(x,y)^{-\alpha''}\mu_2^R(x)\mu_2^R(y)\,dx\,dy\]
with $n<\alpha'<\alpha''<\alpha$. One can easily check that
\begin{equation}\label{ball-condition-mu-R} \int_{B_r} \mu_2^{R}(x)\,dx\lesssim r^\alpha,\end{equation}
where the implicit constant is independent of $R$. Therefore, by considering a dyadic decomposition on $d_g(x,y)$, it follows that $I$ is finite uniformly in $R$, as desired.

It remains to estimate $II$. By standard dyadic decomposition on $|\xi|$, we obtain
\begin{align*}II&\leq \sum_{k\geq 0}2^{-(\alpha'-n)k}\int\int_{|\xi|\leq 2^k} |\widehat{f\,d\pi_V\mu_1}(\xi)|^2\,d\xi\,d\lambda(V)\\
&\lesssim \sum_{k\geq 0}2^{-(\alpha'-n)k}\int\int |\widehat{f\,d\pi_V\mu_1}(\xi)|^2\,\psi\Big(\frac{\xi}{2^k}\Big)\,d\xi\,d\lambda(V),
\end{align*}
where $\psi$ is a positive function whose Fourier transform has compact support. For each $k$, if we denote $f_V(x):=f(V,\pi_V(x))$, then
\[2^{-(\alpha'-n)k}\int\int |\widehat{f\,d\pi_V\mu_1}(\xi)|^2\,\psi\Big(\frac{\xi}{2^k}\Big)\,d\xi\,d\lambda(V)\]
\begin{align*}&=2^{-(\alpha'-n)k}\int\int \left|\int e^{-2\pi i \pi_V(x)\cdot\xi}f_V( x)\,d\mu_1(x)\right|^2\,\psi\Big(\frac{\xi}{2^k}\Big)\,d\xi\,d\lambda(V)\\
&=2^{-(\alpha'-n)k}\iiint\left(\int e^{-2\pi i (\pi_V(x)-\pi_V(y))\cdot\xi}\,\psi\Big(\frac{\xi}{2^k}\Big)\,d\xi\right)f_V(x)\,f_V(y)\,d\lambda(V)\,d\mu_1(x)\,d\mu_1(y)\\
&=2^{(2n-\alpha')k}\iint\int\hat{\psi}(2^k(\pi_V(x)-\pi_V(y)))\,f_V(x)\,f_V(y)\,d\lambda(V)\,d\mu_1(x)\,d\mu_1(y).\end{align*}
Since $\hat{\psi}$ has compact support and $\|f_V(x)f_V(y)\|_{L^{q’/2}(V)}\leq 1$ for every $x$ (we may assume $1<q<2$), by H\"older's inequality the above is

\[\leq 2^{(2n-\alpha')k}\iint\left(\lambda\{V: |\pi_V(x)-\pi_V(y)|\lesssim 2^{-k} \}\right)^{1-\frac{2}{q'}}d\mu_1(x)\,d\mu_1(y).\]

Now, we need to understand the measure of the set 
\[\{V: |\pi_V(x)-\pi_V(y)|\lesssim 2^{-k} \}.\]
Fix $x\neq y$. By the diffeomorphism $\Pi_{p_0,p_0'}$ between the Grassmannians at different points, we may take $y$ as the origin. Then by \eqref{equal-distance-foliation} we have $$|\pi_V(x)-\pi_V(y)|\approx d_g(\pi_V(x), \pi_V(y))\approx d_g(x, \Sigma_{V,\pi_V(y)})=d_g(x, \exp_y(V_y^\perp)).$$
Then, it becomes a linear algebra problem in $T_yM$. One can easily see that $|\pi_V(x)-\pi_V(y)|\lesssim 2^{-k}$ if and only if $V$ is contained in the $\min\{d_g(x,y)^{-1}2^{-k}, 1\}$-neighborhood of the codimension $n$ submanifold in \eqref{codimension-d/2-submanifold}, with $\gamma$ being the geodesic connecting $x$ and $y$. 
Therefore
$$\lambda\{V: |\pi_V(x)-\pi_V(y)|\lesssim 2^{-k} \}\lesssim \min\{d_g(x,y)^{-1}2^{-k}, 1\}^n$$
and the estimate of $II$ is reduced to
$$\sum_{k\geq 0}2^{(2n-\alpha')k}\int\int \min\{d_g(x,y)^{-1}2^{-k}, 1\}^{n(1-\frac{2}{q'})}\,d\mu_1(x)\,d\mu_1(y).$$
The next step is standard. Consider cases $d_g(x,y)\leq 2^{-k}$ and $d_g(x,y)\approx 2^{-k+j}$, $j=1,2\dots, k$ separately. By the ball condition on $\mu_1$ one can conclude that 
$$II \lesssim\sum_{k\geq 0} 2^{(2n-\alpha')k}\sum_{0\leq j\leq k}2^{-n(1-\frac{2}{q'})j+\alpha (-k+j)}=\sum_{j\geq 0}2^{(\alpha-n(1-\frac{2}{q'}))j}\sum_{k\geq j} 2^{(2n-\alpha'-\alpha)k}.$$
As $\alpha, \alpha'>n$, this is
$$\lesssim \sum_{j\geq 0} 2^{(n+\frac{2n}{q'}-\alpha')j},$$
which is finite, as desired, when $q'>0$ is large enough.

\bibliography{mybibfile}
\bibliographystyle{alpha}

\end{document}